\renewcommand{\p@enumii}{}
\newcommand{\titel}{Defective acyclic colorings of planar graphs}
\definecolor{hellblau}{rgb}{0.2,0.4,1} 
\definecolor{dunkelblau}{rgb}{0,0,0.8}
\definecolor{dunkelgruen}{rgb}{0,0.5,0}
\theoremstyle{plain}
\newtheorem{satz}{Satz}[]
\newtheorem{theorem}[satz]{Theorem}
\newtheorem{lemma}[satz]{Lemma}
\newtheorem{proposition}[satz]{Proposition}
\newtheorem{corollary}[satz]{Corollary}	
\theoremstyle{remark}
\theoremstyle{definition}
\newtheorem{definition}[satz]{Definition}
\begin{document}
	\title{\titel}
	\author{
		On-Hei Solomon Lo\thanks {Faculty of Environment and Information Sciences, Yokohama National University, Yokohama 240-8501, Japan} \and
		Ben Seamone\thanks {Mathematics Department, Dawson College, Montreal, QC, Canada}\;\thanks{D\'epartement d'informatique et de recherche op\'erationnelle, Universit\'e de Montr\'eal, Montreal, QC, Canada} \and
		Xuding Zhu\thanks {Department of Mathematics, Zhejiang Normal University, Jinhua 321004, China}\\
	}
	\date{}
	\maketitle
	
	\begin{abstract}
		This paper studies two variants of defective acyclic coloring of planar graphs. For a graph $G$ and a coloring $\varphi$ of $G$, a 2CC transversal is a subset $E'$ of $E(G)$ that intersects every 2-colored cycle. Let $k$ be a positive integer. We denote by $m_k(G)$  the minimum integer $m$ such that $G$ has a proper $k$-coloring which has a 2CC transerval of size $m$, and by $m'_k(G)$ the minimum size of a subset $E'$ of $E(G)$ such that $G-E'$ is acyclic  $k$-colorable. We prove that for any $n$-vertex $3$-colorable planar graph  $G$, $m_3(G) \le n - 3$ and for any planar graph $G$, $m_4(G) \le n - 5$ provided that $n \ge 5$. We show that these upper bounds are sharp: there are infinitely many planar graphs attaining these upper bounds. Moreover, the minimum 2CC transversal $E'$  can be chosen in such a way that $E'$ induces a forest. We also prove that for any planar graph $G$, $m'_3(G) \le (13n - 42) / 10$ and $m'_4(G) \le (3n - 12) / 5$.
	\end{abstract}
	
	\section{Introduction} \label{sec:intro}
	
	An {\em acyclic $k$-coloring} of a graph $G$ is  a proper $k$-coloring of  $G$ with no 2-colored cycles.  Confirming a conjecture of Gr{\"u}nbaum~\cite{Gruenbaum1973},   Borodin~\cite{Borodin1979} proved   that every planar graph has an acyclic 5-coloring. This celebrated result is best possible as there are planar graphs that are not acyclic 4-colorable (e.g.\ the octahedron). Acyclic coloring has been  studied extensively for several decades and applied to solve other problems on graph coloring and partitioning. We refer to~\cite{Borodin2013} for a comprehensive survey on this subject.
	
	This paper studies defective acyclic $k$-coloring of planar graphs mainly for $k=3,4$.
	In other words, we study $k$-colorings of planar graphs for which the condition of being an acyclic coloring is not completely satisfied, however, we want to limit the violation of the acyclicity rules. We consider two variants of defective acyclic coloring. 
	
	\begin{definition}
		\label{def-transversal}
		Given a graph $G$ and a  proper coloring $\varphi$ of $G$, a {\em $2$-colored cycle transversal} ($2$CC transversal) with respect to $\varphi$ is a subset $E'$ of $E(G)$ that intersects all 2-colored cycles. In other words, $G-E'$ contains no 2-colored cycles.  
	\end{definition}

	\begin{definition}
		Let $G$ be a graph and $k$ be a positive integer. We define two parameters $m_k(G)$ and $m'_k(G)$ as follows:
		\begin{itemize}
			\item $m_k(G):=\min_{E' \subseteq E(G)}\{|E'|: \text{$E'$ is a 2CC transversal with respect to a proper $k$-coloring}\}.$
			\item    $m'_k(G):=\min_{E' \subseteq E(G)}\{|E'|: \text{$G-E'$ has an acyclic $k$-coloring}\}.$
		\end{itemize}
	\end{definition}
	
	Note that $m_k(G)=m'_k(G) =0$ if and only if $G$ is acyclic $k$-colorable. If $G$ has no proper $k$-coloring, then   $m_k(G)$ is not defined.  In this case, we let $m_k(G) := \infty$. It follows from the definition that for any graph $G$ and integer $k$,   $m_k(G) \ge m'_k(G)$.  
	
	We are interested in the case that $G$ is a planar graph and $k=3,4$ as Borodin's theorem asserts that $m_5(G) = 0$. To obtain an upper bound for $m_k(G)$, we need to construct a proper $k$-coloring $\varphi$ of $G$ and find a 2CC transerval $E'$. One immediate difficulty  is that, for $k=4$, the existence of a proper $4$-coloring of a planar graph follows from the Four Color Theorem. For $k=3$, it is NP-complete to decide whether a planar graph $G$ is 3-colorable, and hence there is no easy way to construct a proper 3-coloring of $G$. Fortunately, it turns out that  tight upper bounds for $m_4(G)$ and $m_3(G)$ for the whole family of  planar graphs and the whole family of 3-colorable planar graphs do not depend on a particular proper coloring of $G$. 
	
	For any proper coloring $\varphi$ of a graph $G$, define \begin{align*}
		m(G, \varphi) := \min_{E' \subseteq E(G)}\{|E'|:\text{$E'$ is a 2CC transerval with respect to $\varphi$} \}.
	\end{align*}
	We prove in Section~\ref{sec:subgraph} that for any planar graph $G$ on $n$ vertices and any proper coloring $\varphi$ of $G$, $m(G, \varphi) \le n - |\varphi(V(G))|$, where $|\varphi(V(G))|$ denotes the number of colors used in $\varphi$. To this end, we study the case when $G$ is a plane triangulation in Section~\ref{sec:m}. Moreover, we show that if $n \ge 5$, then there is a 4-coloring $\varphi$ of $G$ with $m(G, \varphi) \le n - 5$. We apply these results to prove that for every planar graph $G$, $m_4(G) \le n-5$ provided that $n\ge 5$, and $m_3(G) \le n-3$ provided that $G$ is 3-colorable. These two bounds are tight as there are infinitely many 3-colorable planar graphs $G$ with $m_3(G)=n-3$ and infinitely many planar graphs $G$ with $m_4(G)=n-5$.
	Besides, we show in Section~\ref{sec:subgraph} that for any proper coloring $\varphi$ of a planar graph $G$, we can find a 2CC transerval $E'$ with $|E'| = m(G, \varphi)$ that induces a forest.
	In Section~\ref{sec:m_k} we study the parameter $m'_k(G)$. We show that $m'_3(G) \le (13n - 42) / 10$ and $m'_4(G) \le (3n - 12) / 5$.
	
	We shall mention an application of our results on acyclic colorings of subdivisions. For a graph $G$ and a positive integer $k$, define $m''_k(G)$ to be the minimum size of an edge set $E' \subseteq E(G)$ such that the graph obtained from $G$ by subdividing each edge in $E'$ by one vertex is acyclically $k$-colorable. It is easy to observe that $m_k(G) \ge m''_k(G) \ge m'_k(G)$. It was shown in \cite{MNRW2013} that for any $n$-vertex planar graph $G$, $m''_4(G) \le n - 3$. Our upper bound for $m_4(G)$ immediately improves it to $m''_4(G) \le n - 5$ for $n \ge 5$.

	All graphs considered in this paper are finite and simple. We denote by $V(G)$ and $E(G)$ the vertex set and the edge set of   $G$, respectively. For $v \in V(G)$, denote by $N_G(v)$ the set of vertices adjacent to $v$ and by $d_G(v)$ the degree of $v$. For a positive integer $k$, denote $[k] := \{1, \dots, k\}$. A \emph{$k$-coloring} $\varphi$ of $G$ is a function which assigns a color $\varphi(v) \in [k]$ to each vertex $v \in V(G)$. We say a coloring $\varphi$ is \emph{proper} if $\varphi(u) \neq \varphi(v)$ for any $uv \in E(G)$. In fact, we always consider proper colorings unless specified otherwise. Given a $k$-coloring $\varphi$ of $G$, we define the color classes by $\varphi^{-1}(i) := \{v \in V(G) : \varphi(v) = i\}$ for any $i \in [k]$.
	For any distinct $i, j \in [k]$, define $G_{ij}$ to be the subgraph of $G$ induced by $\varphi^{-1}(i) \cup \varphi^{-1}(j)$.

	\section{Upper bounds for $m(G,\varphi)$} \label{sec:m}
	
	In this section we prove upper bounds on the parameter $m(G, \varphi)$ for planar graphs. We first present several lemmas for plane triangulations.
	
	\begin{definition}
		Let $G$ be a plane triangulation on at least 4 vertices. Denote by $\mathcal{E}_G$ the set of separating triangles of $G$, and by $\mathcal{V}_G$ the set of maximal connected subgraphs of $G$ without separating triangles. The graph $\mathcal{T}_G$ is defined to be the graph on $\mathcal{V}_G$ with edge set $\mathcal{E}_G$ such that $G_1, G_2 \in \mathcal{V}_G$ are joined by $T \in \mathcal{E}_G$ if and only if both $G_1$ and $G_2$ contain $T$.
	\end{definition}
	
	It is easy to see that $\mathcal{V}_G$ is a family of 4-connected plane triangulations and $\mathcal{T}_G$ is a tree. Let $\mathcal{V}_G := \{G_1, \dots, G_t\}$ and $\mathcal{E}_G := \{T_1, \dots, T_{t-1}\}$. The graph $G$ can be retrieved from the vertex-disjoint union of $G_1, \dots, G_t$ by identifying the copies of triangle $T$ in $G_i, G_j$ for each $T = G_i G_j \in \mathcal{E}_G$. Hence $\sum_{i \in [t]} |V(G_i)| = |V(G)| + 3(t - 1)$.
	
	\begin{lemma} \label{lem:A}
		Let $G$ be a graph and $\varphi$ be a proper coloring of $G$. If $A$ is an edge set of $G$ such that $A \cap E(G_{ij})$ is an acyclic edge set for any distinct $i, j \in [k]$, then there exists $E' \subseteq E(G) \setminus A$ satisfying that $|E'| = m(G, \varphi)$ and $\varphi$ is an acyclic coloring of $G - E'$.
	\end{lemma}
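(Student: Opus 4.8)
The plan is to reduce the statement to the bichromatic subgraphs $G_{ij}$, one pair of colors at a time, exploiting the fact that these subgraphs carry a partition of $E(G)$, and then to invoke a basic matroid extension argument in each of them.

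First I would record the structural observation that makes the reduction work. Since $\varphi$ is proper, every edge of $G$ joins vertices of two distinct colors and hence lies in exactly one $G_{ij}$; thus $\{E(G_{ij}) : 1 \le i < j \le k\}$ is a partition of $E(G)$ (and so also $A = \bigcup_{i<j} (A \cap E(G_{ij}))$ is a disjoint union). Each $G_{ij}$ is bipartite, its parts lying in $\varphi^{-1}(i)$ and $\varphi^{-1}(j)$, and a cycle of $G$ is $2$-colored with respect to $\varphi$ precisely when it is a cycle of some $G_{ij}$. Therefore $E' \subseteq E(G)$ is a $2$CC transversal with respect to $\varphi$ if and only if, for every pair $i<j$, the set $E' \cap E(G_{ij})$ meets every cycle of $G_{ij}$, i.e.\ $G_{ij} - E'$ is a forest. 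Because the $E(G_{ij})$ are pairwise disjoint, $|E'| = \sum_{i<j} |E' \cap E(G_{ij})|$, so $m(G,\varphi) = \sum_{i<j} r(G_{ij})$, where $r(G_{ij})$ denotes the number of edges of $G_{ij}$ lying outside a spanning forest of $G_{ij}$; moreover a minimum $2$CC transversal is obtained by deleting, in each $G_{ij}$, exactly the edges outside a spanning forest.

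Next I would use the hypothesis on $A$. Fix a pair $i<j$ and set $A_{ij} := A \cap E(G_{ij})$; by assumption $A_{ij}$ is an acyclic edge set of $G_{ij}$, hence an independent set of the graphic matroid of $G_{ij}$, so it extends to a maximal forest $F_{ij}$ of $G_{ij}$. Put $E'_{ij} := E(G_{ij}) \setminus F_{ij}$. Then $E'_{ij}$ is disjoint from $A$, the subgraph $G_{ij} - E'_{ij} = F_{ij}$ is a forest, and $|E'_{ij}| = r(G_{ij})$. Letting $E' := \bigcup_{i<j} E'_{ij}$, we obtain $E' \subseteq E(G) \setminus A$ with $|E'| = \sum_{i<j} r(G_{ij}) = m(G,\varphi)$, while for every pair $(G - E')_{ij} = F_{ij}$ is a forest; since $\varphi$ is still proper on $G - E'$, it is an acyclic coloring of $G - E'$, as claimed.

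I do not expect a genuine obstacle: once the edge-partition reduction is set up, the content of the lemma is just the remark that an independent set of a graphic matroid extends to a basis, applied separately in each $G_{ij}$, which automatically keeps the deleted set away from $A$. The only point needing a little care is phrasing the reduction so that a minimum $2$CC transversal genuinely decomposes as a sum over color pairs — this relies on the $E(G_{ij})$ being disjoint, which is exactly where properness of $\varphi$ is used.
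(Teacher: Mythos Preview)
Your proof is correct, but it takes a different route from the paper's. The paper argues by an exchange: start with any minimum $2$CC transversal $E'$ minimising $|E'\cap A|$, and if some $uv\in E'\cap A$ survives, use the unique cycle created in $G_{\varphi(u)\varphi(v)}-(E'-uv)$ to find an edge $e'\notin A$ on that cycle and swap $uv$ for $e'$, contradicting minimality. Your argument is instead constructive: you first decompose $m(G,\varphi)=\sum_{i<j} r(G_{ij})$ via the edge partition, then extend each independent set $A\cap E(G_{ij})$ to a spanning forest $F_{ij}$ and take $E'=\bigcup_{i<j}(E(G_{ij})\setminus F_{ij})$. Both proofs exploit the graphic-matroid structure of the $G_{ij}$, but from dual viewpoints: the paper uses the circuit/exchange axiom, you use the fact that independent sets extend to bases. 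Your version has the advantage of making the formula $m(G,\varphi)=\sum_{i<j} r(G_{ij})$ explicit (the paper only records this later), while the paper's swap argument is shorter and avoids computing $m(G,\varphi)$ at all.
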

	\begin{proof}
		Let $E' \subseteq E(G)$ be such that $|E'| = m(G, \varphi)$, $\varphi$ is an acyclic coloring of $G - E'$ and, subject to this, $|E' \cap A|$ is minimum. Suppose there exists $uv \in E' \cap A$. There is precisely one cycle $C$ in $G_{\varphi(u)\varphi(v)} - (E' - uv)$. As $A \cap E(G_{\varphi(u)\varphi(v)})$ is acyclic, there exists $e' \in E(C) \setminus A$. Then $G_{\varphi(u)\varphi(v)} - (E' - uv + e')$ is acyclic, $|E' - uv + e'| = |E'| = m(G, \varphi)$ and $|(E' - uv + e') \cap A| < |E' \cap A|$, contradicting our choice of $E'$. Hence $E' \subseteq E(G) \setminus A$ as desired.
	\end{proof}
	
	\begin{lemma} \label{lem:trisep}
		Let $G$ be a plane graph, $T$ be a separating triangle of $G$ and $\varphi$ be a proper coloring of $G$. Let $A_1$ and $A_2$ be the components of $G - T$, and for $i \in [2]$, $G^i$ be the subgraph of $G$ induced by $V(A_i) \cup V(T)$. Then $m(G, \varphi) = m(G^1, \varphi^1) + m(G^2, \varphi^2)$, where $\varphi^i$ denotes the restriction of $\varphi$ on $V(G^i)$.
	\end{lemma}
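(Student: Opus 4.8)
The plan is to establish the two inequalities $m(G,\varphi)\ge m(G^1,\varphi^1)+m(G^2,\varphi^2)$ and $m(G,\varphi)\le m(G^1,\varphi^1)+m(G^2,\varphi^2)$ separately, in each direction using Lemma~\ref{lem:A} to arrange that every 2CC transversal in play avoids the three edges of $T$. The starting point is a structural observation: since $A_1$ and $A_2$ are the components of $G-T$, no edge of $G$ joins $V(A_1)$ to $V(A_2)$, so $E(G)=E(G^1)\cup E(G^2)$ with $E(G^1)\cap E(G^2)=E(T)$; likewise, for every color pair $\{i,j\}$ the graph $G_{ij}$ is the union of $(G^1)_{ij}$ and $(G^2)_{ij}$ (the analogously defined subgraphs of $G^1$ and $G^2$), and the two sides meet only in the set $K:=V(T)\cap(\varphi^{-1}(i)\cup\varphi^{-1}(j))$ of vertices of $T$ colored $i$ or $j$. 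Because $T$ is a triangle and $\varphi$ is proper, $|K|\le 2$ and $E(T)\cap E(G_{ij})$ is either empty or the single edge of $T$ spanned by $K$; in particular $A:=E(T)$ meets every two-color class in an acyclic set, so Lemma~\ref{lem:A} is applicable to $G$ and to each of $G^1,G^2$.

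For the lower bound I would apply Lemma~\ref{lem:A} to $(G,\varphi)$ with $A=E(T)$ to get a 2CC transversal $E'$ of $G$ with $|E'|=m(G,\varphi)$ and $E'\cap E(T)=\emptyset$, and set $E'_\ell:=E'\cap E(G^\ell)$ for $\ell\in\{1,2\}$. Every 2-colored cycle of $G^\ell$ (with respect to $\varphi^\ell$) is a 2-colored cycle of $G$ all of whose edges lie in $E(G^\ell)$, hence is met by $E'_\ell$; so $E'_\ell$ is a 2CC transversal of $(G^\ell,\varphi^\ell)$ and $|E'_\ell|\ge m(G^\ell,\varphi^\ell)$. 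Since $E'$ avoids $E(T)$, the sets $E'_1$ and $E'_2$ are disjoint with union $E'$, so $m(G,\varphi)=|E'_1|+|E'_2|\ge m(G^1,\varphi^1)+m(G^2,\varphi^2)$.

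For the reverse inequality I would apply Lemma~\ref{lem:A} to $(G^1,\varphi^1)$ and to $(G^2,\varphi^2)$, each time with $A=E(T)$, to obtain 2CC transversals $E'_1,E'_2$ of $G^1,G^2$ with $|E'_\ell|=m(G^\ell,\varphi^\ell)$ and $E'_\ell\cap E(T)=\emptyset$; these are then disjoint, so $|E'_1\cup E'_2|=m(G^1,\varphi^1)+m(G^2,\varphi^2)$, and it remains only to check that $E'_1\cup E'_2$ is a 2CC transversal of $(G,\varphi)$. Let $C$ be a 2-colored cycle of $G$, on colors $\{i,j\}$, so $C$ is a cycle of $G_{ij}=(G^1)_{ij}\cup(G^2)_{ij}$. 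The key claim is a dichotomy: either $C$ lies entirely in $(G^1)_{ij}$, or entirely in $(G^2)_{ij}$, or else $|K|=2$, say $K=\{t_a,t_b\}$, and $C$ is the union of a path $P$ from $t_a$ to $t_b$ in $(G^1)_{ij}$ and a path $Q$ from $t_a$ to $t_b$ in $(G^2)_{ij}$, each of length at least $2$. This follows from a short analysis of $|C\cap K|$: a cycle can leave one side only through $T$, so one contained in neither side must contain both vertices of $K$ (hence $|K|=2$) and split into two paths between them, one on each side, and neither path has length $1$ since that edge would be $t_at_b\in E(T)$ and would place $C$ on one side. In the first two cases $C$ is met by $E'_1$ or $E'_2$. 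In the third case $P$ together with the edge $t_at_b$ is a cycle of $(G^1)_{ij}$, which $E'_1$ meets; as $t_at_b\notin E'_1$, the transversal $E'_1$ meets an edge of $P\subseteq C$. Hence $E'_1\cup E'_2$ meets $C$, giving $m(G,\varphi)\le m(G^1,\varphi^1)+m(G^2,\varphi^2)$.

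The step I expect to be the main obstacle is the dichotomy in the upper-bound argument: a 2-colored cycle of $G$ need not stay on one side of the separating triangle, since it may bounce between $A_1$ and $A_2$ through two vertices of $T$, and handling such crossing cycles --- together with the normalization that $E'_1$ and $E'_2$ avoid $E(T)$, so that $E'_1$ cannot waste its hit on the edge $t_at_b$, which is absent from $C$ --- is the crux. Everything else is routine bookkeeping with the identities $E(G)=E(G^1)\cup E(G^2)$ and $E(G^1)\cap E(G^2)=E(T)$.
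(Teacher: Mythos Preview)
Your proposal is correct and follows essentially the same approach as the paper: both directions use Lemma~\ref{lem:A} with $A=E(T)$ to normalize the transversals to avoid $E(T)$, and the upper-bound step handles a crossing $2$-colored cycle by closing off a subpath with the triangle edge to obtain a $2$-colored cycle inside one $G^\ell$. The paper's write-up is more terse (it jumps straight to ``$C$ must contain two vertices of $T$ and $C+v_1v_2$ contains a $2$-colored cycle in $G^1-E_1'$ or $G^2-E_2'$''), while you spell out the dichotomy explicitly, but the arguments are the same.
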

	\begin{proof}
		Without loss of generality, we let $V(T) = \{v_1,v_2,v_3\}$ with $\varphi(v_i) = i$ for $i \in [3]$. By Lemma~\ref{lem:A}, there exists $E' \subseteq E(G) \setminus E(T)$ such that $|E'| = m(G, \varphi)$ and $\varphi$ is an acyclic coloring of $G - E'$. As $G^i - (E' \cap E(G^i))$ is acyclically colored by $\varphi^i$ ($i \in [2]$), we have $m(G, \varphi) = |E'| = |E' \cap E(G^1)|+|E' \cap E(G^2)| \ge m(G^1, \varphi^1) + m(G^2, \varphi^2)$.
		
		Similarly, by Lemma~\ref{lem:A}, let $E_i' \subseteq E(G^i) \setminus E(T)$ be such that $|E_i'| = m(G^i, \varphi^i)$ and $G^i - E_i'$ is acyclically colored by $\varphi_i$. Let $E' := E_1' \cup E_2'$. Observe that if there is a cycle $C$ which is colored by only two colors in $G - E'$, then $C$ must contain two vertices of $T$, say $v_1, v_2$, and $C + v_1v_2$ contains some cycle in $G^1 - E_1'$ or $G^2 - E_2'$ which uses only two colors as well, a contradiction. Hence $G - E'$ is acyclically colored and $m(G, \varphi) \le |E'| = |E_1'| + |E_2'| = m(G^1, \varphi^1) + m(G^2, \varphi^2)$.
	\end{proof}
	
	\begin{lemma} \label{lem:tritree}
		Let $G$ be a plane triangulaion on at least $4$ vertices and $\varphi$ be a proper coloring of $G$. Let $\mathcal{V}_G := \{G_1, \dots, G_t\}$. We have $m(G, \varphi) = \sum_{i \in [t]} m(G_i, \varphi_i)$, where $\varphi_i$ denotes the restriction of $\varphi$ on $V(G_i)$.
	\end{lemma}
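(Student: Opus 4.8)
The plan is to prove the identity by induction on $t = |\mathcal{V}_G|$, peeling off one block at a time with Lemma~\ref{lem:trisep}. The base case $t = 1$ is immediate: then $G$ itself contains no separating triangle, so $\mathcal{V}_G = \{G\}$ and the claimed equality reads $m(G,\varphi) = m(G,\varphi)$.

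For the inductive step I would assume $t \ge 2$ and that the lemma holds whenever the block family has fewer than $t$ members. Since $\mathcal{T}_G$ is a tree, it has a leaf; after relabelling, say $G_t$ is a leaf of $\mathcal{T}_G$ and $T := T_{t-1}$ is the separating triangle incident to it. Next I would invoke the retrieval description of $G$ recorded just after the definition of $\mathcal{T}_G$: deleting the leaf $G_t$ from $\mathcal{T}_G$ leaves a subtree on $\{G_1,\dots,G_{t-1}\}$ with edge set $\{T_1,\dots,T_{t-2}\}$, and the plane triangulation $H$ obtained by gluing $G_1,\dots,G_{t-1}$ along their shared triangles satisfies $\mathcal{V}_H = \{G_1,\dots,G_{t-1}\}$ and $\mathcal{E}_H = \{T_1,\dots,T_{t-2}\}$; moreover $G$ is recovered from $H$ and $G_t$ by identifying their copies of $T$. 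Writing $A_1, A_2$ for the components of $G - T$ and $G^j := G[V(A_j)\cup V(T)]$ as in Lemma~\ref{lem:trisep}, this says precisely that, under a suitable labelling, $G^1 = H$ and $G^2 = G_t$, with $\varphi^1,\varphi^2$ the corresponding restrictions of $\varphi$ (so $\varphi^2 = \varphi_t$).

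Granting this structural identification, the argument closes quickly. Lemma~\ref{lem:trisep} applied to $G$ and $T$ gives $m(G,\varphi) = m(G^1,\varphi^1) + m(G^2,\varphi^2) = m(H,\varphi^1) + m(G_t,\varphi_t)$. Since $A_1$ is nonempty we have $|V(H)| \ge |V(T)| + 1 = 4$, and the block family of $H$ has $t-1 < t$ members, so the induction hypothesis yields $m(H,\varphi^1) = \sum_{i\in[t-1]} m(G_i,\varphi_i)$; adding $m(G_t,\varphi_t)$ gives the desired formula. The step I expect to demand the most care is the structural identification in the previous paragraph — namely, confirming that peeling a leaf off $\mathcal{T}_G$ corresponds exactly to splitting $G$ at the separating triangle $T$ as in Lemma~\ref{lem:trisep}, so that $G^2$ really is the single block $G_t$ and $G^1$ really has block-decomposition tree $\mathcal{T}_G - G_t$; everything else is routine bookkeeping with the restrictions $\varphi_i$.
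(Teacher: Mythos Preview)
Your proposal is correct and follows essentially the same approach as the paper: induction on $|\mathcal{V}_G|$, splitting $G$ at a separating triangle via Lemma~\ref{lem:trisep} and applying the induction hypothesis. The only cosmetic difference is that the paper splits at an arbitrary $T \in \mathcal{E}_G$ and invokes the induction hypothesis on both pieces $G^1$ and $G^2$, whereas you peel off a leaf so that one piece is a single block; both variants work equally well.
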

	\begin{proof}
		We prove by induction on $|\mathcal{V}_G|$. It trivially holds when $|\mathcal{V}_G| = 1$. 
		
		Suppose $|\mathcal{V}_G| > 1$. Let $T \in \mathcal{E}_G$, $A_1$ and $A_2$ be the components of $G - T$, and for $i \in [2]$, $G^i$ be the subgraph of $G$ induced by $V(A_i) \cup V(T)$. We may assume $G_1,\dots,G_{t'} \subseteq G^1$ and $G_{t'+1},\dots,G_t \subseteq G^2$ for some $1 \le t' < t$. Then, by Lemma~\ref{lem:trisep} and the induction hypothesis, $m(G, \varphi) = m(G^1, \varphi^1) + m(G^2, \varphi^2) = \sum_{i \in [t']} m(G_i, \varphi_i) + \sum_{i \in [t] \setminus [t']} m(G_i, \varphi_i) = \sum_{i \in [t]} m(G_i, \varphi_i)$.
	\end{proof}

	\begin{lemma} \label{lem:n-3}
		Let $G$ be a $3$-colorable plane triangulation on $n$ vertices and $\varphi$ be the unique proper $3$-coloring of $G$. For any distinct $i, j \in [3]$, $G_{ij}$ is connected. Moreover, if $n > 3$, $G_{ij}$ is $2$-connected.
	\end{lemma}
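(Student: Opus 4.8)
The plan is to exploit the fact that a 3-colorable plane triangulation has a \emph{unique} proper 3-coloring (up to permutation of colors), which is a classical consequence of the fact that in a triangulation every face is a triangle, so once three mutually adjacent vertices are colored, the colors propagate uniquely across the dual. Granting uniqueness, fix distinct $i,j \in [3]$ and let $\ell$ be the third color; write $H := G_{ij}$ for the subgraph induced by the two color classes $\varphi^{-1}(i) \cup \varphi^{-1}(j)$. The key structural observation I would establish is that $H$ is precisely the ``link structure'' around the independent set $\varphi^{-1}(\ell)$: since $G$ is a triangulation, each vertex $w$ with $\varphi(w)=\ell$ has its neighborhood $N_G(w)$ inducing a cycle in $G$, and that cycle lies entirely in $H$ (as all neighbors of $w$ avoid color $\ell$ and hence are colored $i$ or $j$). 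So $H$ contains a spanning collection of cycles, one around each $\ell$-colored vertex, and moreover removing $\varphi^{-1}(\ell)$ from $G$ leaves exactly $H$, which is the boundary of the resulting planar (near-triangulated) region.

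For connectivity: I would argue that $G - \varphi^{-1}(\ell)$ is connected. Indeed, $G$ is 3-connected (every plane triangulation on $\ge 4$ vertices is 3-connected), and $\varphi^{-1}(\ell)$ is an independent set, so deleting it cannot disconnect $G$ unless $|\varphi^{-1}(\ell)|$ is large relative to connectivity — more carefully, one shows directly that any two vertices of $H$ are joined by a path in $H$ by taking a path in $G$ between them and rerouting around each $\ell$-vertex it passes through using the link cycle (which lies in $H$). This rerouting argument gives connectivity of $H$ cleanly. The base case $n = 3$ (the triangle $K_3$) is handled separately: then $H = G_{ij}$ is a single edge, which is connected.

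For 2-connectivity when $n > 3$: suppose $v$ is a cutvertex of $H$, say $\varphi(v) = i$. Then $H - v$ has components $C_1, C_2, \dots$; I want a contradiction with planarity and with $G$ being a triangulation. The idea is that the link cycles around $\ell$-vertices, together with the fact that $v$ separates $H$, force some $\ell$-vertex whose link cycle meets two different components of $H - v$ only through $v$ — but a cycle through $v$ meeting two components of $H-v$ is impossible, since a cycle minus one vertex is a path and hence connected. More precisely: every $\ell$-vertex $w$ has its link cycle entirely inside one component of $H - v$ together possibly with $v$; so the $\ell$-vertices partition according to which component their link lies in, and then $v$ together with the ``inside'' of one group would separate that group from the rest in $G$, contradicting 3-connectivity of $G$ (a single vertex $v$ cannot be a cutset). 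I expect the main obstacle to be making this last separation argument fully rigorous — specifically, verifying that the partition of $\ell$-vertices by components of $H - v$ is well-defined and that it induces a genuine $1$-cut in $G$, which requires care about $\ell$-vertices whose link cycle uses $v$ and lies ``on the boundary'' of two parts; handling those may require using that $N_G(w)$ is a cycle (not just a cut) and a short planarity/rotation-system argument at $v$.
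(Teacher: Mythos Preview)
Your approach is correct and genuinely different from the paper's. The paper proceeds by induction on $n$: it finds a degree-$4$ vertex $v$ (which exists because a $3$-colorable triangulation is Eulerian), and either contracts a path $v_i v v_{i+2}$ through $v$ to a smaller $3$-colorable triangulation, or, when both opposite pairs of neighbors have extra common neighbors, locates a separating triangle and splits $G$ into two pieces; in each case the $2$-connectivity of $G_{ij}$ is inherited from the smaller instance. Your argument is instead a direct structural one: you use that every $\ell$-colored vertex $w$ has its link $N_G(w)$ forming a cycle inside $H = G_{ij}$, reroute paths through those cycles to get connectivity, and then show that a hypothetical cutvertex $v$ of $H$ would be a cutvertex of $G$ itself, contradicting $3$-connectivity of the triangulation.

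The worry you flag at the end is not a real obstacle. The partition of $\ell$-vertices by components of $H - v$ is well-defined because for each $\ell$-vertex $w$ the link cycle $N_G(w)$ has length at least $3$, so $N_G(w) \setminus \{v\}$ is a nonempty path, hence lies in a single component $C_{k(w)}$ of $H - v$. Every edge of $G - v$ is then either an edge of $H - v$ (hence inside some $C_k$) or joins an $\ell$-vertex $w$ to a neighbor $u \ne v$, and such a $u$ lies in $N_G(w)\setminus\{v\} \subseteq C_{k(w)}$. Thus the sets $D_k := C_k \cup \{w : k(w) = k\}$ partition $V(G)\setminus\{v\}$ with no edges between parts, so $v$ is a cutvertex of $G$, contradicting $3$-connectivity. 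You should also note, to conclude $2$-connectivity rather than merely ``no cutvertex'', that $|V(H)| \ge 3$; this follows since each $\ell$-vertex has degree at least $3$ and the link cycles cover all faces, forcing $|\varphi^{-1}(\ell)| \le (n-2)/2$.

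Your route is shorter and avoids induction, case analysis, and the small-order base cases entirely; the paper's inductive framework, on the other hand, dovetails with the contraction machinery used in its subsequent proofs.
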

	\begin{proof}
		We prove by induction on $n$. The triangulations of order at most 6 are listed in Figure~\ref{fig:smalltri}. Among these graphs, only the triangle and the octahedron are 3-colorable. It is not hard to verify that the claims hold for these two graphs. From now on we assume that $n > 6$.
		
		As $G$ is a 3-colorable triangulation, every vertex of $G$ has an even degree, and hence there exists $v \in V(G)$ with $d_G(v) = 4$. Let $v_1 v_2 v_3 v_4 v_1$ be the cycle induced by $N_G(v)$. We have $\varphi(v_i) = \varphi(v_{i + 2})$ for each $i \in [2]$. 
		Suppose there exists $i \in [2]$ such that $v_i$ and $v_{i + 2}$ have no common neighbor other than $v, v_{i + 1}, v_{i + 3}$, where $v_5 := v_1$. We contract $v_i v v_{i + 2}$ to obtain $G'$ and call the new vertex $v'$. Let $\varphi': V(G') \rightarrow [3]$ be such that $\varphi'(v') = \varphi(v_i)$ and $\varphi'(u) = \varphi(u)$ for $u \in V(G') \setminus \{v'\}$. It is clear that $\varphi'$ is the unique proper 3-coloring of the triangulation $G'$. By the induction hypothesis, $G_{ij}'$ is 2-connected for any distinct $i, j \in [3]$. Then, one can easily prove by the construction that $G_{ij}$ is 2-connected for any distinct $i, j \in [3]$.
		
		Suppose for every $i \in [2]$, $v_i$ and $v_{i + 2}$ have some common neighbor other than $v, v_{i + 1}, v_{i + 3}$. Since $G$ is not the octahedron, it has some separating triangle $T$. Let $A_1, A_2$ be the components of $G - T$. We consider the subgraphs $G^i$ of $G$ induced by $V(A_i) \cup V(T)$ ($i \in [2]$). Let $\varphi_i$ be restriction of $\varphi$ on $V(G^i)$. As $|V(G^i)| > 3$, it follows from the induction hypothesis that $G_{jk}^i$ is 2-connected for any distinct $j, k \in [3]$ ($i \in [2]$), from which it immediately follows that $G_{jk}$ is 2-connected for any distinct $j, k \in [3]$.
	\end{proof}

	\begin{figure}[!ht]
		\centering
		\includegraphics[scale=1.2]{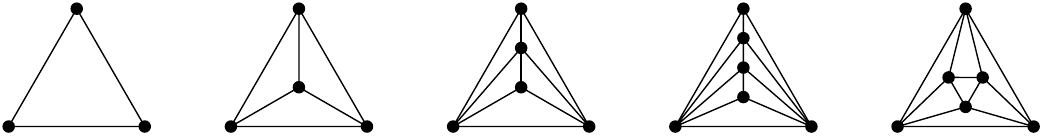}
		\caption{The triangulations of order at most 6.}
		\label{fig:smalltri}
	\end{figure}
	
	Let $G$ be a graph with a proper $k$-coloring $\varphi$. Denote by $c_{ij}$ the number of connected components of $G_{ij}$. The number of edges we need to remove from $G_{ij}$ to make $\varphi$ acyclic is $|E(G_{ij})|-|V(G_{ij})|+c_{ij}$. As $E(G_{ij})$ are edge-disjoint for distinct $i,j$, and each vertex $v$ of $G$ is contained in $k-1$ subgraphs $G_{ij}$, we know that \begin{align*}
		m(G, \phi) = \sum_{1 \le i < j \le k} (|E(G_{ij})|-|V(G_{ij})|+c_{ij}) = |E(G)|-(k-1)|V(G)|+\sum_{1 \le i < j \le k}c_{ij}.
	\end{align*} We obtain the following result by this observation.
	
	\begin{theorem} \label{thm:n-3}
		Assume $G$ is a $3$-colorable plane triangulation on $n$ vertices and $\varphi$ is the unique proper $3$-coloring of $G$. Then $m(G, \varphi) = n - 3$. For $v \in V(G)$, let $\varphi_v$ be the $4$-coloring of $G$ defined as $\varphi_v(v) = 4$ and $\varphi_v(u) = \varphi(u)$ for all $u \in V(G) \setminus \{v\}$. If $n > 3$, we have $m(G, \varphi_v) \le n - 5$.
	\end{theorem}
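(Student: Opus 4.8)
The plan for the equality $m(G,\varphi)=n-3$ is to read off the answer from the edge-counting identity displayed immediately before the theorem, namely $m(G,\varphi)=|E(G)|-2|V(G)|+\sum_{1\le i<j\le 3}c_{ij}$. Since $\varphi$ is proper, every edge of $G$ is bichromatic, so $\sum_{1\le i<j\le3}|E(G_{ij})|=|E(G)|=3n-6$; every vertex lies in exactly two of the three subgraphs $G_{ij}$; and, crucially, Lemma~\ref{lem:n-3} gives $c_{ij}=1$ for each of the three pairs. Substituting yields $m(G,\varphi)=(3n-6)-2n+3=n-3$.

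For the second statement, assume without loss of generality $\varphi(v)=1$, and let $d_i(v)$ denote the number of neighbours of $v$ of colour $i$, for $i=2,3$. I would then describe the six bichromatic subgraphs of $\varphi_v$, writing $H_{ij}:=(G_v)_{ij}$. One has $H_{14}$ edgeless (no two vertices of colour $1$ in $\varphi$ are adjacent), $H_{24}$ and $H_{34}$ equal to a star centred at $v$ together with isolated vertices (hence forests), $H_{23}=G_{23}$, and $H_{12}=G_{12}-v$, $H_{13}=G_{13}-v$. A forest $F$ contributes $|E(F)|-|V(F)|+c(F)=0$, so $H_{14},H_{24},H_{34}$ contribute nothing. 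Because $n>3$, Lemma~\ref{lem:n-3} tells us $G_{12},G_{13},G_{23}$ are $2$-connected, so $G_{12}-v$ and $G_{13}-v$ are connected; hence
\[
m(G,\varphi_v)=\bigl(|E(G_{23})|-|V(G_{23})|+1\bigr)+\sum_{i=2,3}\bigl(|E(G_{1i})|-d_i(v)-(|V(G_{1i})|-1)+1\bigr).
\]
Using $\sum_{1\le i<j\le3}|E(G_{ij})|=3n-6$, $\sum_{1\le i<j\le3}|V(G_{ij})|=2n$, and $d_2(v)+d_3(v)=d_G(v)$, this collapses to $m(G,\varphi_v)=n-1-d_G(v)$. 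Finally, in a proper $3$-colouring of a plane triangulation the neighbourhood of each vertex is a properly $2$-coloured cycle, hence an even cycle, so $d_G(v)\ge 4$ and therefore $m(G,\varphi_v)\le n-5$.

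The steps requiring care are: (a) the connectivity of $G_{12}-v$ and $G_{13}-v$, which is exactly where the $2$-connectedness half of Lemma~\ref{lem:n-3} (valid since $n>3$) is invoked; (b) the bookkeeping of which vertices and edges fall into each of the six subgraphs $H_{ij}$, in particular that $H_{14}$ is edgeless and $H_{24},H_{34}$ are forests; and (c) the evenness of $d_G(v)$. None of these is substantial, so I expect the proof to be short; the only genuine input beyond routine counting is Lemma~\ref{lem:n-3}, and the mildest obstacle is simply keeping the additive constants straight in the final simplification.
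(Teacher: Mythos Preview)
Your proposal is correct and follows essentially the same approach as the paper: both parts are direct applications of the edge-counting identity $m(G,\psi)=|E(G)|-(k-1)|V(G)|+\sum c_{ij}$, with Lemma~\ref{lem:n-3} supplying the connectivity information ($c_{ij}=1$ for the three nontrivial bichromatic pieces, and the $2$-connectedness guaranteeing that deleting $v$ keeps $G_{1i}-v$ connected). Your write-up is in fact slightly more explicit than the paper's---you spell out exactly which $H_{ij}$ are forests, derive the exact equality $m(G,\varphi_v)=n-1-d_G(v)$, and justify $d_G(v)\ge 4$ via the even-cycle neighbourhood argument---but the substance is identical.
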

	\begin{proof}
		By Lemma~\ref{lem:n-3}, $G_{ij}$ is connected for any distinct $i, j \in [3]$. Hence \begin{align*}
			m(G, \varphi) &= \sum_{1 \le i < j \le 3} (|E(G_{ij})| - |V(G_{ij})| + 1) = |E(G)| - 2|V(G)| + 3 = n - 3.
		\end{align*}
		
		For the second statement, we fix $v \in V(G)$ and focus on the coloring $\varphi_v$. Without loss of generality, assume $\varphi(v) = 3$. By Lemma~\ref{lem:n-3}, $G_{12}$ (with respect to the coloring $\varphi_v$) is 2-connected. Moreover, for $i \in [2]$, the subgraph induced by $\varphi_v^{-1}(i) \cup \varphi_v^{-1}(3) \cup \{v\} = \varphi^{-1}(i) \cup \varphi^{-1}(3)$ is 2-connected and hence $G_{i3}$ (with respect to the coloring $\varphi_v$) is connected. It is also obvious that $G_{i4}$ is a forest for every $i \in [3]$. As $d_G(v) \ge 4$, we have that \begin{align*}
			m(G, \varphi) &= \sum_{1 \le i < j \le 3} (|E(G_{ij})| - |V(G_{ij})| + 1) = (|E(G)| - d_G(v)) - 2(|V(G)| - 1) + 3 \le n - 5. \qedhere
		\end{align*}
	\end{proof}
	
	We are now ready to prove the main result of this section.
	
	\begin{theorem} \label{thm:n-4}
		Assume $G$ is a  plane triangulation on $n$ vertices and $\varphi$ is a proper coloring of $G$. Let $k := |\varphi(V(G))|$. Then $m(G, \varphi) \le n - k$.
		If, in addition, $k = 4$, $n \ge 5$ and $G$ is $4$-connected, then $m(G, \varphi) \le n - 5$.
	\end{theorem}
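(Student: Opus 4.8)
The plan is to prove, by induction on $n$, the statement that $m(G,\varphi)\le n-|\varphi(V(G))|$ holds for \emph{every} plane triangulation $G$, and then to refine the induction for the $4$-connected case. Throughout I would work with the identity stated just before this theorem, $m(G,\varphi)=|E(G)|-(k-1)n+\sum_{1\le i<j\le k}c_{ij}=(4-k)n-6+\sum_{i<j}c_{ij}$, which turns the two desired bounds into $\sum_{i<j}c_{ij}\le(k-3)n-k+6$ (first statement) and $\sum_{i<j}c_{ij}\le n+1$ (second statement, $k=4$). If $k\le 3$ then $G$ is $3$-chromatic, $\varphi$ is its unique proper $3$-colouring, and Lemma~\ref{lem:n-3} gives that every $G_{ij}$ is connected, so $\sum_{i<j}c_{ij}=3$ and $m(G,\varphi)=n-3$; this is Theorem~\ref{thm:n-3}. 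Hence in the inductive step we may assume $k\ge4$.

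For the inductive step, if $G$ has a separating triangle $T$ we apply Lemma~\ref{lem:trisep}: $m(G,\varphi)=m(G^{1},\varphi^{1})+m(G^{2},\varphi^{2})$, where $G^{1},G^{2}$ are triangulations on fewer than $n$ vertices with $|V(G^{1})|+|V(G^{2})|=n+3$, and, since both contain $T$, their colourings use $k_{1},k_{2}$ colours with $k_{1}+k_{2}\ge k+3$; induction then gives $m(G,\varphi)\le(|V(G^{1})|-k_{1})+(|V(G^{2})|-k_{2})\le n-k$. If $G$ has no separating triangle it is $4$-connected (the only exception, $K_{4}$, is a base case), so it has minimum degree at least $4$ and hence a vertex $v$ of degree $4$ or $5$, and we argue as in the proof of Lemma~\ref{lem:n-3}. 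If the link cycle of $v$ uses at most two colours, we contract a diagonal path through $v$ exactly as there (if this fails for both diagonals, $G$ is the octahedron or the icosahedron, treated directly). If the link cycle uses at least three colours, we delete $v$ and re-triangulate the resulting $4$- or $5$-gon as a fan from one of its vertices whose colour differs from both of its ``distance-$2$'' neighbours on that cycle; such a vertex always exists, the resulting diagonals join differently-coloured endpoints, and $4$-connectivity guarantees that these diagonals are not already edges of $G$ (otherwise they would bound a separating triangle), so the result is again a simple triangulation, properly coloured and using at most $k$ colours. One then checks that $m$ changes in a controlled way: deleting $v$ only strips pendant edges, degree-$2$ subpaths, or isolated copies of $v$ from the bichromatic subgraphs involving the colour $\varphi(v)$ (and hence does not raise $m$), while inserting the at most two new diagonals raises $\sum_{i<j}c_{ij}$ by only a bounded amount; balancing this against the loss of one or two vertices preserves $m(G,\varphi)\le n-k$.

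The delicate accounting in this last step is what I expect to be the main obstacle. One must separate the several configurations of the link colouring of $v$, and also the case in which a reduction removes the colour $\varphi(v)$ altogether (conveniently, this is exactly when deleting $v$ leaves $m$ unchanged, so the bound survives). The tightest point is the degree-$5$ case when the pentagon link uses only three colours and both new diagonals happen to join distinct components, and—more seriously—the $4$-connected refinement $m(G,\varphi)\le n-5$ for $k=4$: there the fan move removes only one vertex while possibly raising $\sum_{i<j}c_{ij}$ by one, so no slack is left over, and it may be necessary to give a separate argument for $4$-connected triangulations. A plausible such argument is a direct proof that $\sum_{i<j}c_{ij}\le n+1$ by discharging or Euler's formula, exploiting the fact that every bounded face of each $G_{ij}$ must contain a vertex coloured outside $\{i,j\}$ (for $k=4$, a vertex of the complementary bichromatic subgraph $G_{ab}$). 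Making the counts of $\sum_{i<j}c_{ij}$ exact throughout this case analysis is where most of the work lies.
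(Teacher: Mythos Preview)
Your plan for the first inequality $m(G,\varphi)\le n-k$ is essentially the paper's own proof: induction on $n$, handle a separating triangle via Lemma~\ref{lem:trisep}, and in the $4$-connected case reduce at a vertex $v$ of degree $4$ or $5$ by either contracting a same-coloured diagonal through $v$ or deleting $v$ and re-triangulating its link with diagonals between differently coloured vertices. The paper tracks an explicit transversal $E''$ through these reductions rather than your component sums $\sum_{i<j}c_{ij}$, but that is only a bookkeeping difference. (Two small corrections: for $d_G(v)=4$ one never needs both diagonals to be contractible---with $n>6$ and no separating triangle at least one diagonal pair has no extra common neighbour---so the ``octahedron or icosahedron'' dichotomy is not needed; and the base cases should include all triangulations with $n\le\max\{6,k\}$.)

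The genuine gap is the second statement, and your proposed fallback (a global discharging or Euler count showing $\sum_{i<j}c_{ij}\le n+1$) is not how the paper proceeds, nor is it clear such a count can be made to work---equality $m(G,\varphi)=n-5$ holds for infinitely many $4$-connected triangulations, so there is no slack. The paper instead proves both statements \emph{simultaneously} by induction, and the crux is the subcase $d_G(v)=4$, $\varphi(v_1)=\varphi(v_3)$, $\varphi(v_2)=\varphi(v_4)$, $k=4$. Here the contracted triangulation $G'$ need not be $4$-connected, so induction only yields $m(G',\varphi')\le|V(G')|-4$, and after adding back two edges one gets merely $n-4$. The paper then analyses the tight case $m(G',\varphi')=|V(G')|-4$ structurally: by Lemma~\ref{lem:tritree} and the inductive hypothesis, the tree $\mathcal{T}_{G'}$ must be a path all of whose $4$-connected pieces are $3$-coloured except for a single copy of $K_4$, and that $K_4$ is forced to be an internal node of the path (else $\varphi(v_2)\ne\varphi(v_4)$). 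This pins down a specific separating triangle of $G'$ and a specific bichromatic cycle $C$ (the link of a vertex $u$ with $\varphi'(u)=\varphi'(v_2)$), and a targeted edge-swap---remove one edge $e_C$ of $C$ from $E''$ and insert $vv_2,\,v_1v_2$---recovers the missing unit while keeping the colouring acyclic. This structural edge-swap is the idea your proposal is missing.
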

	\begin{proof}
		We prove both statements by induction on $n$. It is easy to check that they hold for $n \le \max\{6, k\}$, thus we assume $n > \max\{6, k\}$.
		
		We first consider, for the first statement, that $G$ is not 4-connected, i.e.\ $G$ has some separating triangle $T$. Let $A_1, A_2$ be the components of $G - T$. Let $G_i$ be the subgraphs of $G$ induced by $V(A_i) \cup V(T)$ ($i \in [2]$). 
		Denote by $\varphi_i$ the restriction of $\varphi$ on $V(G_i)$. 
		Write $n_i := |V(G_i)|$ and $k_i := |\varphi_i(G_i)|$. Note that $n_1 + n_2 = n + 3$ and $k_1 + k_2 \ge k + 3$. 
		By the induction hypothesis and Lemma~\ref{lem:A}, for each $i \in [2]$, there exists $E_i' \subseteq E(G_i) \setminus E(T)$ such that $|E_i'| \le n_i - k_i$ and $G_i - E_i'$ is acyclically colored by $\varphi_i$. Let $E' := E_1' \cup E_2'$. It is easy to prove that $G - E'$ is acyclically colored by $\varphi$ and
		$|E'| = |E_1'| + |E_2'| \le (n_1 - k_1) + (n_2 - k_2) \le n - k$. 
		
		Henceforth, we assume that $G$ has no separating triangle and thus $\delta(G) = 4, 5$. Fix $v \in V(G)$ such that $d_G(v) = \delta(G)$.
		Depending on the value of $\delta(G)$, we consider two cases.
		
		\smallskip

		{\bf Case 1:}  $d_G(v) = \delta(G) = 4$. 
		
		Let $v_1 v_2 v_3 v_4 v_1$ be the cycle induced by $N_G(v)$. Since $n > 6$ and $G$ has no separating triangle, we can assume that $v_1, v_3$ have no common neighbor other than $v, v_2, v_4$. 
		
		If $\varphi(v_1) \neq \varphi(v_3)$, we obtain $G'$ from $G$ by deleting $v$ and adding the edge $v_1 v_3$. Let $\varphi'$ be the restriction of $\varphi$ on $V(G')$. Denote $n' := |V(G')|$ and $k' := |\varphi'(V(G'))|$. Note that $G'$ is 4-connected, $n' = n - 1 \ge 6$ and $k' = k$ or $k - 1$. Moreover, if $k' = k - 1$, then $v$ is the only vertex that is colored by $\varphi(v)$ and hence no 2-colored cycle in $G$ contains $v$. By the induction hypothesis, there exists $E'' \subseteq E(G')$ such that $G' - E''$ is acyclically colored by $\varphi'$ and $|E''| = m(G', \varphi') \le n' - k'$. Define $S := \{v v_2\}$ if $k' = k$, and $S := \emptyset$ if $k' = k - 1$. Set $E' := (E'' \setminus \{v_1 v_3\}) \cup S$. One can readily show that $G - E'$ is acyclically colored by $\varphi$ and $|E'| \le n - k$. If $k = k' = 4$, we additionally require from the induction hypothesis that $|E''| \le n' - 5$, which yields in this case that $|E'| \le n - 5$. If $k = 4$ and $k' = k - 1$, then, suppose $\varphi(V(G)) = [4]$ and $\varphi(v) = 4$, one can deduce from Lemma~\ref{lem:n-3} that $G_{ij}$ are connected for all distinct $i, j \in [3]$ and hence prove in a similar way as in the proof of Theorem~\ref{thm:n-3} that $m(G, \varphi) = n - 5$.
		
		Assume $\varphi(v_1) = \varphi(v_3)$. First we prove that $m(G, \varphi) \le n- |\varphi(V(G))|$. Let $G'$ be from $G$ by contracting $v_1 v v_3$ to a new vertex $v'$ and denote the coloring induced from $\varphi$ by $\varphi'$ so that $\varphi(v') = \varphi(v_1)$. Denote $n' := |V(G')|$ and $k' := |\varphi'(V(G'))|$. We have $n' = n - 2 \ge 5$ and $k' = k$ or $k - 1$. By the induction hypothesis and Lemma~\ref{lem:A}, there exists $E'' \subseteq E(G') \setminus \{v'v_2, v'v_4\}$ such that $G' - E''$ is acyclically colored by $\varphi'$ and $|E''| = m(G', \varphi') \le n' - k'$. Note that any path joining $v_1, v_3$ in $G - \{v, v_2, v_4\}$ corresponds to a cycle containing $v'$ in $G'$ as $v_1, v_3$ have no common neighbor other than $v, v_2, v_4$. Define $S := \{v v_2\}$ if $k' = k$, and $S := \emptyset$ if $k' = k - 1$. Let $E' := E'' \cup \{v_1v_2\} \cup S$. It is clear that $|E'| \le n - k$ and $G - E'$ is acyclically colored by $\varphi$ as $v_1 v_2 v_3 v_4 v_1$ is the only cycle that is possibly 2-colored in $G - E'' - v$.
		
		It remains to show that if $k=4$, then $m(G, \varphi) \le n - 5$.
		If $\varphi(v_2) \neq \varphi(v_4)$, we take $E' := E''$ with $|E'| \le n' - 4 = n - 6$ and it is easy to show that $G - E'$ is acyclically colored by $\varphi$. So we assume that $\varphi(v_2) = \varphi(v_4)$. If $k' = 3$, then it follows from Theorem~\ref{thm:n-3} that $m(G, \varphi) \le n - 5$. So we assume $k' = 4$; in particular, $|E''| \le n' - 4$.
		If $|E''| = m(G', \varphi') \le n' - 5$, we take $E' := E'' \cup \{vv_2, v_1v_2\}$, so $|E'| = |E''| + 2 \le n - 5$ and $G - E'$ is acyclically colored by $\varphi$. This yields that $m(G, \varphi) \le |E'| \le n - 5$.
		
		Assume $m(G', \varphi') =|V(G')| - 4$.  As $|V(G')| > 4$, by the induction hypothesis, $G'$ is not 4-connected, and hence contains separating triangles. As $G$ is 4-connected, it follows that 
		each separating triangle of $G'$ contains $v'$ and separates $v_2$ and $v_4$; an example is given in Figure~\ref{fig:BC}.
		This implies that 
		$\mathcal{T}_{G'}$ is a path 
		$G_1' \dots G_t'$ ($t \ge 2$), with end-vertex $G'_1$ containing $v_2$, and the other end-vertex  $G'_t$ containing $v_4$.

		Denote by $\varphi'_i$ the restriction of $\varphi'$ on $V(G_i')$. By Lemma~\ref{lem:tritree} and Theorem~\ref{thm:n-3}, precisely one graph $G_i'$ from $\mathcal{V}_{G'}$ has $|\varphi'_i(V(G'_i))|=4$, $m(G'_i, \varphi_i)= |V(G'_i)| - 4$ and $|\varphi'_j(V(G_j))| = 3$ for all $j \in [t] \setminus \{i\}$.
		By the induction hypothesis, we know that $|V(G'_i)| \le 4$ and hence $G'_i$ is isomorphic to $K_4$.

		Note that $G_i'$ is not a leaf of $\mathcal{T}_{G'}$, for otherwise,  say $i = 1$, then $|\varphi'(V(G') \setminus \{v_2\}| = 3$. This implies that $\varphi(v_2) \neq \varphi(v_4)$, contradicting the above assumption.
		
		Thus $|\varphi'(V(G_j'))| = 3$ and $\{\varphi'(v'), \varphi'(v_2)\} \subset \varphi'(V(G_{j}'))$ for $j \in \{1,t\}$. As $G_i'$ is an internal vertex of $\mathcal{T}_{G'}$, we have $\varphi'(V(G_1')) \neq \varphi'(V(G_t'))$. Without loss of generality, we may assume that $\varphi'(V(G_1')) = [4] \setminus \varphi(v)$ and $\varphi'(V(G_t')) = \{\varphi(v), \varphi'(v'), \varphi'(v_2)\}$. Let $T$ be the separating triangle of $G'$ that is contained in $G_1'$. Write $V(T) := \{v', u, w\}$ such that $\varphi'(u) = \varphi'(v_2)$. Note that $\varphi'(w) \neq \varphi(v)$. Let $C$ be the cycle induced by the neighbors of $u$ in $G_1'$ (see Figure~\ref{fig:BC}(b) for an example) and $e_C$ be an arbitrary edge of $C$. By Lemma~\ref{lem:A}, we may require $E'' \subseteq E(G') \setminus (\{v'v_2, v'v_4\} \cup (E(C) \setminus \{e_C\}))$ as $\varphi'(v_2) = \varphi'(v_4) = \varphi'(u) \notin \varphi'(V(C))$, and hence $e_C \in E''$. Let $E' := (E'' \setminus \{e_C\}) \cup \{vv_2, v_1v_2\}$. We have $|E'| = |E''| + 1 \le n - 5$. It remains to show that $G - E'$ is acyclically colored by $\varphi$. Again, it is easy to show that $G - E' - e_C$ is acyclically colored by $\varphi$. Hence, any cycle $K$ which uses only two colors in $G - E'$ contains $e_C$ and the two colors 
		used in $K$ are $\varphi'(w), \varphi'(v')$. So $K$ 
		does not contain $v, v_2, v_4$. If $\{v_1, v_3\} \subset V(K)$, then after contracting the path $v_1 v v_3$, $K$ becomes the union of two edge-disjoint cycles in $(G'_{\varphi'(v')\varphi'(w)} - E'') + e_C$ (as $v_1, v_3$ have no other common neighbors than $v, v_2, v_4$), a contradiction. If $|\{v_1, v_3\} \cap V(K)| \le 1$, then $K$ corresponds to $C$. Since $C$ is a cycle separating $v_2$ and $v_4$ in $G'$, $K$ is a cycle separating $v_2$ and $v_4$ in $G$, which is however impossible since $v_2 v v_4$ is a path in $G$ not intersecting $K$. 
		
		\smallskip
		
		{\bf Case 2:}  $d_G(v) = \delta(G) = 5$. 
		
		Let $v_1v_2v_3v_4v_5v_1$ be the induced cycle on $N_G(v)$. 
		
		If $|\varphi(N_G(v))| = 3$, we may assume that $\varphi(v_1) = \varphi(v_3)$ and $\varphi(v_2) = \varphi(v_4)$. As $G$ is 4-connected and $\delta(G) = 5$, we may assume that $v_1, v_3$ have no common neighbor other than $v, v_2$. Let $G'$ be obtained from $G$ by contracting $v_1 v v_3$ to a new vertex $v'$. We do not distinguish edges from $E(G') \setminus \{v' v_2\}$ from their corresponding edges in $G$. Set $\varphi'(v') := \varphi(v_1)$ and $\varphi'(u) := \varphi(u)$ for all $u \in V(G') \setminus \{v'\}$. Denote $n' := |V(G')|$ and $k' := |\varphi'(V(G'))|$. We have $n' = n - 2$ and $k' = k$ or $k - 1$. By Lemma~\ref{lem:A} and the induction hypothesis, there exists $E'' \subseteq E(G') \setminus \{v'v_2\}$ such that $\varphi'$ is an acyclic coloring of $G' - E''$ and $|E''| = m(G', \varphi') \le n' - k'$. Set $S := \{v v_2\}$ if $k' = k$ and $S := \emptyset$ if $k' = k - 1$. Define $E' := E'' \cup S$. It is easy to show that $|E'| \le n - k - 1$ and $\varphi$ is an acyclic coloring of $G - E'$.
		
		If $|\varphi(N_G(v))| \ge 4$, we may assume that $\varphi(v_i) = i$ for each $i \in [4]$. Obtain $G'$ from $G$ by deleting $v$ and adding edges $v_1v_3, v_1v_4$. Let $\varphi'$ be the restriction of $\varphi$ on $V(G) \setminus \{v\}$. Denote $n' := |V(G')|$ and $k' := |\varphi'(V(G'))|$. We have $n' = n - 1$ and $k' = k$ or $k - 1$. By Lemma~\ref{lem:A} and the induction hypothesis, there exists $E'' \subseteq E(G') \setminus \{v'v_3, v'v_4\}$ such that $\varphi'$ is an acyclic coloring of $G' - E''$ and $|E''| = m(G', \varphi') \le n' - k'$. Set $S := \{v v_5\}$ if $k' = k$ and $S := \emptyset$ if $k' = k - 1$. Define $E' := E'' \cup S$. It is easy to show that $|E'| \le n - k$ and $\varphi$ is an acyclic coloring of $G - E'$. We remark that in this case we have $k > 4$, thus we do not need to consider the second statement.
	\end{proof}

	\begin{figure} [!ht]
		\centering
		\subfigure[]{\includegraphics[scale = 1.5]{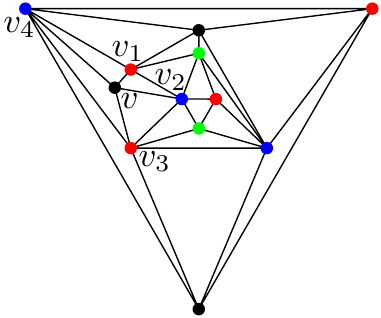}} \label{subfig:B}
		\hfil
		\subfigure[]{\includegraphics[scale = 1.5]{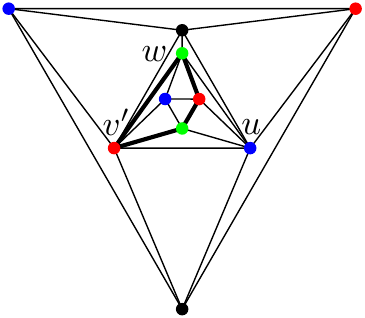}} \label{subfig:C}\\
		\caption{(a) A 4-colored plane triangulation $G$. (b) The plane triangulation $G'$ obtained from $G$ by contracting the path $v_1 v v_3$. The cycle $C$ consists of the thick edges.} 
		\label{fig:BC}
	\end{figure}
	
	The following corollary characterizes plane triangulations $G$ and colorings $\varphi$ that satisfy the eqaulities $m(G, \varphi) = n - 3$ and $m(G, \varphi) = n - 4$, respectively.
	
	\begin{corollary} \label{cor:n-3,n-4}
		Let $G$ be a plane triangulation on $n$ vertices and $\varphi$ be a coloring of $G$. Let $\mathcal{V}_{G} := \{G_1, \dots, G_t\}$ and $\varphi_i$ be the restriction of $\varphi$ on $V(G_i)$ for $i \in [t]$. We have that $m(G, \varphi) = n - 3$ if and only if $|\varphi(V(G))| = 3$; and $m(G, \varphi) = n - 4$ if and only if there exists $i \in [t]$ such that $G_i$ is isomorphic to $K_4$ and $|\varphi_j(V(G_j))| = 3$ for all $j \in [t] \setminus \{i\}$.
	\end{corollary}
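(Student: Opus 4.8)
The plan is to push everything through the tree decomposition $\mathcal{T}_G$. By Lemma~\ref{lem:tritree}, $m(G,\varphi)=\sum_{i\in[t]}m(G_i,\varphi_i)$, and since $\sum_{i\in[t]}|V(G_i)|=n+3(t-1)$, setting $\delta_i:=|V(G_i)|-m(G_i,\varphi_i)$ yields
\begin{align*}
    m(G,\varphi)=n+3(t-1)-\sum_{i\in[t]}\delta_i.
\end{align*}
Thus the whole statement reduces to understanding which values each $\delta_i$ can take.

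The first step is to establish three facts about $\delta_i$. Since $G_i$ is a plane triangulation it uses at least three colors, so Theorem~\ref{thm:n-4} gives $\delta_i\ge 3$. Moreover $\delta_i=3$ if and only if $|\varphi_i(V(G_i))|=3$: if $|\varphi_i(V(G_i))|\ge 4$ then Theorem~\ref{thm:n-4} gives $m(G_i,\varphi_i)\le|V(G_i)|-4$, hence $\delta_i\ge 4$, while if $|\varphi_i(V(G_i))|=3$ then $\varphi_i$ is the unique proper $3$-coloring of the triangulation $G_i$ and Theorem~\ref{thm:n-3} gives $m(G_i,\varphi_i)=|V(G_i)|-3$. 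Finally $\delta_i=4$ if and only if $G_i\cong K_4$: if $G_i\cong K_4$ then each pair of color classes spans a single edge, so $m(G_i,\varphi_i)=0$; conversely $\delta_i=4$ forces $|\varphi_i(V(G_i))|=4$ (the cases $|\varphi_i(V(G_i))|=3$ and $|\varphi_i(V(G_i))|\ge 5$ give $\delta_i=3$ and $\delta_i\ge 5$ by Theorems~\ref{thm:n-3} and~\ref{thm:n-4}), and then, using that a separating-triangle-free plane triangulation on at least five vertices is $4$-connected, the second statement of Theorem~\ref{thm:n-4} would give $\delta_i\ge 5$ whenever $|V(G_i)|\ge 5$; hence $|V(G_i)|=4$ and $G_i\cong K_4$.

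Feeding these facts into the identity completes the proof. As $\sum_i(\delta_i-3)\ge 0$, we have $m(G,\varphi)\le n-3$ with equality precisely when $\delta_i=3$ for all $i$, i.e.\ when $|\varphi_i(V(G_i))|=3$ for all $i$; this last condition is equivalent to $|\varphi(V(G))|=3$, one direction being immediate from $\varphi(V(G))=\bigcup_i\varphi_i(V(G_i))$ and $|\varphi_i(V(G_i))|\ge 3$, and the other following because two pieces adjacent in $\mathcal{T}_G$ share a triangle that already exhausts the three colors of each, so connectedness of $\mathcal{T}_G$ forces all the sets $\varphi_i(V(G_i))$ to coincide. (Equivalently, the first equivalence is immediate from Theorems~\ref{thm:n-3} and~\ref{thm:n-4} applied to $G$ itself.) Similarly $m(G,\varphi)=n-4$ if and only if $\sum_i(\delta_i-3)=1$, i.e.\ exactly one $\delta_i$ equals $4$ and the rest equal $3$, which by the facts above says exactly one $G_i$ is isomorphic to $K_4$ while $|\varphi_j(V(G_j))|=3$ for every other $j$; since $|\varphi_j(V(G_j))|=3$ rules out $G_j\cong K_4$, the uniqueness is automatic, and this is exactly the condition in the statement.

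I expect the only step needing real content beyond this bookkeeping to be the equivalence $\delta_i=4\iff G_i\cong K_4$: excluding larger $4$-connected pieces with defect $4$ is precisely what the $4$-connectivity part of Theorem~\ref{thm:n-4} is designed to do, together with the standard fact that separating-triangle-free triangulations on at least five vertices are $4$-connected.
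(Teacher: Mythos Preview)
Your proof is correct and follows precisely the approach the paper intends: the corollary is stated without proof because it is a direct consequence of Lemma~\ref{lem:tritree} combined with Theorems~\ref{thm:n-3} and~\ref{thm:n-4}, and your bookkeeping via $\delta_i=|V(G_i)|-m(G_i,\varphi_i)$ is exactly the right way to package this. The key step---that $\delta_i=4$ forces $G_i\cong K_4$---is indeed the content of the $4$-connected clause of Theorem~\ref{thm:n-4}, and the paper uses the identical deduction inside the proof of that theorem when analyzing the case $m(G',\varphi')=|V(G')|-4$.
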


	\section{Acyclic 2CC transerval and upper bounds for $m_k(G)$}
	\label{sec:subgraph}

	Let $G$ be a graph and $\varphi$ a coloring of $G$. We have shown upper bounds on $m(G, \varphi)$ when $G$ is a plane triangulation. In this section, we show that we can choose the 2CC  transversal $E'$ so that it induces a forest as well as extend the results to general planar graphs.

	\begin{definition}
		Let $G$ be a graph and $U \subseteq V(G)$. An edge set $E' \subseteq E(G)$ is \emph{$U$-acyclic} if the graph induced by $E'$ is a forest and contains no path joining two distinct vertices of $U$. With abuse of notation, we say an edge set is $H$-acyclic instead of $V(H)$-acyclic for any subgraph $H$ of $G$, and if $H$ is a graph induced by a single edge $e$, we write $e$-acyclic instead of $H$-acyclic.
	\end{definition}
	
	\begin{proposition} \label{pro:forest}
		Let $G$ be a plane triangulation and $\varphi$ be a proper coloring of $G$. For any facial cycle $F$ of $G$, there exists an $F$-acyclic $2$CC transversal $E_F$ with respect to $\varphi$. 
	\end{proposition}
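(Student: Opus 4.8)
The plan is to induct on $n := |V(G)|$, proving the slightly stronger statement in which --- as in Lemma~\ref{lem:A} --- we are also given an edge set $A \subseteq E(G)$ with $A \cap E(G_{ij})$ acyclic for all distinct colors $i,j$, and are required to produce $E_F \subseteq E(G)\setminus A$; carrying $A$ along the induction is what makes the reductions below self-contained. The base cases are $n \le 6$: among plane triangulations on at most $6$ vertices, only the triangle, $K_4$ and the octahedron have no separating triangle, and for these the claim is checked directly (for $K_4$, any three edges forming a path work); every other triangulation on at most $6$ vertices has a separating triangle and is covered by the next paragraph.

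\emph{Separating triangle.} Suppose $G$ has a separating triangle $T$. Split $G$ along $T$ into triangulations $G^1, G^2$, each having $T$ as a facial cycle and having fewer than $n$ vertices; since $F \neq T$, the facial cycle $F$ is facial in exactly one of them, say $G^1$. By induction choose an $F$-acyclic $2$CC transversal $E_1$ of $G^1$ and a $T$-acyclic $2$CC transversal $E_2$ of $G^2$, both avoiding $A$. An edge of $T$ already joins two vertices of $T$, so a $T$-acyclic set is disjoint from $E(T)$; hence $E_2 \cap E(T) = \emptyset$, and since $E(G^1)\cap E(G^2) = E(T)$ the sets $E_1, E_2$ are disjoint. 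Put $E_F := E_1 \cup E_2$. This is a $2$CC transversal: a $2$-colored cycle $C$ of $G$ either lies inside one $G^i$, or meets $T$ in exactly two vertices $t_i, t_j$ and splits as $C = P^1 \cup P^2$ with $P^h$ a $t_it_j$-path in $G^h$; then each $P^h + t_it_j$ is a $2$-colored cycle of $G^h$ met by $E_h$, and because $E_2$ avoids $t_it_j$ we get that $E_1$ meets $E(P^1)$ or $E_2$ meets $E(P^2)$, in either case meeting $C$. And $E_F$ is $F$-acyclic: a cycle of $E_1 \cup E_2$, or a path in $E_1 \cup E_2$ joining two vertices of $F$, split into maximal $E_1$- and $E_2$-arcs, would contain a positive-length maximal $E_2$-arc with both ends in $V(G^1)\cap V(G^2) = V(T)$ --- such an arc can meet an $E_1$-arc only in $V(T)$, and a vertex of $F \subseteq V(G^1)$ can be an end of an $E_2$-arc only if it lies in $V(T)$ --- contradicting $T$-acyclicity of $E_2$; hence the cycle (resp.\ path) lies wholly in $E_1$, where it is already forbidden.

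\emph{$4$-connected case.} If $G$ is $4$-connected and $n \ge 7$, I would mimic the vertex-reduction in the proof of Theorem~\ref{thm:n-4}, but starting from a vertex $v$ of degree $4$ or $5$ chosen with $v \notin V(F)$ --- one exists because if every off-$F$ vertex had degree $\ge 6$ then the degree sum would be at least $6(n-3)+4\cdot 3 = 6n-6 > 6n-12 = 2|E(G)|$, a contradiction. Deleting or contracting $v$ together with the triangulating edges, exactly as in that proof, produces a triangulation $G'$ on fewer vertices that still has $F$ as a facial cycle (here one uses that $N_G(v)$ induces a chordless cycle, so no two vertices of $F$ are identified by the contractions used), and one applies the induction hypothesis to $G'$ with an enlarged avoidance set $A'$ including the edges that proof asks the recursive transversal to avoid (e.g.\ $v'v_2$, $v'v_4$). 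One then rebuilds the $G$-transversal from the $G'$-transversal by discarding the auxiliary edges and adding at most two edges incident to $v$; as $v$ is a new vertex of the edge set, these are pendant, so the result stays a forest and no path between two vertices of $F$ is created, and $F$-acyclicity is preserved. Since we want only existence and not the sharp bound $n-5$, the cruder reconstruction of Theorem~\ref{thm:n-4} --- valid in all cases --- suffices, and the delicate extremal sub-case there (where $G'$ degenerates into a path of $K_4$'s and $3$-colorable blocks) is not needed.

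The step I expect to be the main obstacle is the bookkeeping with the sets $A, A'$: one must verify, at each reduction, that the edges prescribed for avoidance meet every $G'_{ij}$ in an acyclic set (they do --- they lie around $v$, or form part of a single bipartite cycle), that $A'$ remains admissible, and that the one or two edges added back to build the $G$-transversal can be chosen outside $A$; together with the routine pendant-edge checks that each reconstruction preserves the forest and $F$-acyclicity properties, this makes the induction go through.
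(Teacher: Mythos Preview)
Your inductive skeleton and the separating-triangle step are fine and match the paper. The gap is in the $4$-connected reduction, specifically your claim that one ``rebuilds the $G$-transversal from the $G'$-transversal by discarding the auxiliary edges and adding at most two edges incident to $v$; as $v$ is a new vertex of the edge set, these are pendant.'' This is false in the contraction sub-case. When $d_G(v)=4$, $\varphi(v_1)=\varphi(v_3)$ and $\varphi(v_2)=\varphi(v_4)$, the $4$-cycle $v_1v_2v_3v_4v_1$ of $G$ is $2$-colored, but it collapses to a pair of parallel edges in $G'$ and so is invisible to $E_F'$; you must add one of its edges to $E_F$, and none of them is incident to $v$. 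Forcing $E_F'$ to avoid $v'v_2, v'v_4$ via your enlarged set $A'$ does nothing for this. Once you add, say, $v_1v_2$, nothing prevents $E_F'$ (viewed in $G$) from already containing a $v_1$--$v_2$ path, or disjoint paths from $v_1$ and $v_2$ to two vertices of $F$, destroying $F$-acyclicity.

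This is precisely where the paper does the real work: its proof of Proposition~\ref{pro:forest} performs a three-way case split on $E_F'\cap\{v'v_2,v'v_4\}$ and, in each case, argues that one of the two choices $j\in\{1,3\}$ makes the added edge $v_jv_2$ (together with further replacements of $v'v_i$ by $v_1v_i,v_3v_i$) keep the set $F$-acyclic --- otherwise one manufactures a forbidden path or cycle already inside $E_F'$. That analysis is the content of the proposition; it is not the ``delicate extremal sub-case'' of Theorem~\ref{thm:n-4} that you set aside. Your $A$-avoidance device also needs care you have not supplied: after contracting $v_1vv_3$, any $v_1$--$v_3$ path in $A\cap E(G_{\varphi(v_1),c})$ becomes a cycle in $A\cap E(G'_{\varphi(v_1),c})$, so admissibility of $A'$ in $G'$ is not automatic from admissibility of $A$ in $G$.
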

	\begin{proof}
We prove by induction on $|V(G)|$. We shall assume $|V(G)| > \max\{6, |\varphi(V(G))|\}$ as the small cases can be readily verified.
		
		Suppose $G$ has some separating triangle $T$. Let $A_1$ and $A_2$ be the components of $G - T$, and for $i \in [2]$, $G_i$ be the subgraph of $G$ induced by $V(A_i) \cup V(T)$. Without loss of generality, assume that $F$ is a facial cycle of $G_1$. By the induction hypothesis, we have an $F$-acyclic 2CC transversal $E_F^1 \subseteq E(G_1)$ of $G_1$ and a $T$-acyclic 2CC transversal $E_T^2 \subseteq E(G_2)$ of $G_2$. It is easy to see that the edge set $E_F := E_F^1 \cup E_T^2$ is an $F$-acyclic 2CC transversal of $G$.
		
		Henceforth, we assume that $G$ has no separating triangle and thus $\delta(G) \ge 4$. Fix $v \in V(G) \setminus V(F)$ such that $d_G(v)=\delta(G) \le 5$. We consider two cases, depending on $d_G(v) = 4$ or $5$.

		\smallskip
		
		{\bf Case 1:}  $d_G(v) = 4$. 
		
		Let $v_1 v_2 v_3 v_4 v_1$ be the cycle induced by $N_G(v)$. Since $|V(G)| > 6$ and $G$ has no separating triangle, we can assume that $v_1, v_3$ have no common neighbor other than $v, v_2, v_4$. If $\varphi(v_1) \neq \varphi(v_3)$, we obtain $G'$ from $G$ by deleting $v$ and adding the edge $v_1 v_3$, and color it with the coloring $\varphi'$ induced from $\varphi$. Clearly, $F$ remains a facial cycle of $G'$. By the induction hypothesis, there exists an $F$-acyclic 2CC transversal $E_F' \subseteq E(G')$ of $G'$. Set $E_F := (E_F' \setminus \{v_1 v_3\}) \cup \{v v_2\}$. One can readily check that $E_F$ is an $F$-acyclic 2CC transversal of $G$.
		
		If $\varphi(v_1) = \varphi(v_3)$, obtain $G'$ from $G$ by contracting $v_1 v v_3$ to a new vertex $v'$ and denote the coloring induced from $\varphi$ by $\varphi'$ so that $\varphi(v') = \varphi(v_1)$. Let $E_F' \subseteq E(G')$ be an $F$-acyclic 2CC transversal of $G'$. Recall that $v_1, v_3$ have no common neighbor other than $v, v_2, v_4$, and hence any path joining $v_1, v_3$ in $G - \{v, v_2, v_4\}$ corresponds to a cycle containing $v'$ in $G'$. We construct $E_F$ as follows. \begin{itemize}
			\item If $E_F' \cap \{v'v_2, v'v_4\} = \emptyset$, then $v_1 v_2 v_3 v_4 v_1$ is the only cycle in $G - (E_F' \cup \{v v_2\})$ that possibly uses only two colors. We claim that there exists $j \in \{1, 3\}$ such that $E_F := E_F' \cup \{v v_2, v_j v_2\}$ induces a forest not connecting any distinct vertices from $V(F)$. Suppose it does not hold, then for each $j \in \{1, 3\}$, the graph induced by $E_F'$ in $G$ contains some path joining $v_j$ and $v_2$, or contains two disjoint paths each joining one vertex from $V(F)$ and one vertex of $v_j, v_2$. In any case, the graph induced by $E_F'$ in $G'$ contains some path joining two vertices from $V(F)$ or some cycle, a contradiction. As $G - E_F$ is acyclically colored by $\varphi$, $E_F$ is the desired edge set.
			\item If $E_F' \cap \{v'v_2, v'v_4\} = \{v'v_i\}$ for some $i \in \{2, 4\}$, set $E_F := (E_F' \setminus \{v'v_i\}) \cup \{vv_2, v_1v_i, v_3v_i\}$. Similarly to the previous case, it can be shown that $G - E_F$ is acyclically colored by $\varphi$ and the subgraph induced by $E_F$ has no cycle and no path joining distinct vertices from $V(F)$.
			\item If $\{v'v_2, v'v_4\} \subseteq E_F'$, then there is a unique path $P$ in $G'-E_F'$ joining $v'$ and $v_2$ using only colors $\varphi(v_1)$ and $\varphi(v_2)$. Therefore $P$ can be viewed as a path in $G - ((E_F' \setminus \{v' v_2, v' v_4\}) \cup E(v_1 v_2 v_3 v_4 v_1))$ connecting $v_2$ and $v_j$ for some $j \in \{1, 3\}$. Since $v_1, v_3$ have no common neighbor other than $v, v_2, v_4$ and the neighbor of $v'$ in $P$ is not $v_4$, the index $j$ is unique. Set $E_F := (E_F' \setminus \{v' v_2, v' v_4\}) \cup \{v v_2, v_j v_2, v_1 v_4, v_3 v_4\}$. Similarly to the previous cases, it is easy to show that $E_F$ is $F$-acyclic. It is left to show that $\varphi$ is an acyclic coloring of $G - E_F$. Suppose to the contrary that there is some 2-colored cycle $C$ in $G - E'$. It is not hard to see that $C$ contains $v_{4 - j}v_2$ but not $v_j$. Then $C - v_{4 - j} v_2$ is a path in $G' - E_F'$ connecting $v'$ and $v_2$ yet different from $P$, a contradiction. 
		\end{itemize}
		
		{\bf Case 2:}  $d_G(v) = 5$. 
		
		Let $v_1 v_2 v_3 v_4 v_5 v_1$ be the induced cycle on $N_G(v)$. If $|\varphi(N_G(v))| = 3$, we may assume that $\varphi(v_1) = \varphi(v_3)$ and $\varphi(v_2) = \varphi(v_4)$. Suppose $v_1, v_3$ have a common neighbor $u$ other than $v, v_2$ and $v_2, v_4$ have a common neighbor $u'$ other than $v, v_3$. Since $G$ has no separating triangle, $u = u'$ and $d_G(v_2) = d_G(v_3) = 4$. If $v_2$ or $v_3$ is not incident to $F$, we may revise our choice of $v$ so that $d_G(v) = 4$. Otherwise, $F$ is the cycle $u v_2 v_3 u$ and since $d_G(v) = 5$, there exists some vertex $w \in V(G) \setminus \{v, v_1, v_2, v_3, v_4, u\}$ such that $d_G(w) \le 5$; we may replace $v$ by $w$. Therefore, without loss of generality, we may assume that $v_1, v_3$ have no common neighbor other than $v, v_2$.
		
		Obtain $G'$ from $G$ by contracting $v_1 v v_3$ to a new vertex $v'$ and denote the coloring induced from $\varphi$ by $\varphi'$ so that $\varphi(v') = \varphi(v_1)$. It is clear that $F$ remains a facial cycle of $G'$. Let $E_F' \subseteq E(G')$ be an $F$-acyclic 2CC transversal of $G'$. We construct $E_F$ as follows. \begin{itemize}
			\item If $v' v_2 \in E_F'$, set $E_F := (E_F' \setminus \{v' v_2\}) \cup \{v v_2, v_1 v_2, v_2 v_3\}$. 
			\item If $v' v_2 \notin E_F'$, set $E_F := E_F' \cup \{v v_2\}$.
		\end{itemize} In both cases it is easy to show that $E_F$ is an $F$-acyclic 2CC transversal of $G$.
		
		If $|\varphi(N_G(v))| > 3$, we may assume that $\varphi(v_i) = i$ for each $i \in [4]$. Let $G'$ be the graph obtained from $G$ by deleting $v$ and adding edges $v_1v_3, v_1v_4$. Let $\varphi'$ be the restriction of $\varphi$ on $V(G) \setminus \{v\}$. Let $E_F'$ be an $F$-acyclic 2CC transversal of $G$. One can easily show that $E_F := (E_F' \setminus \{v_1v_3, v_1v_4\}) \cup \{vv_5\}$ is an $F$-acyclic 2CC transversal of $G$.
	\end{proof}
	
	We remark that the $F$-acyclic 2CC transversal $E_F$ found in Proposition~\ref{pro:forest} induces a forest of at least $|V(F)| = 3$ components and hence has size at most $|V(G)| - 3$. In fact, an $F$-acyclic 2CC transversal of the optimal size $m(G, \varphi)$ does exist due to the following observation.
	Note that for any edge set $E' \subseteq E(G)$, $G - E'$ is acyclically colored by a proper $k$-coloring $\varphi$ of $G$ if and only if $E(G) \setminus E'$ is an independent set of the direct sum of the graphic matroids of $G_{ij}$ ($i, j \in [k]$). This yields the following corollary.
	
	\begin{corollary} \label{cor:forest}
		Let $G$ be a plane triangluation, $\varphi$ be a proper coloring of $G$ and $F$ be a facial cycle of $G$. There exists an $F$-acyclic $2$CC transversal $E' \subseteq E(G)$ with $|E'| = m(G, \varphi)$.
	\end{corollary}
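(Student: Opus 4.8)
The plan is to combine Proposition~\ref{pro:forest} with the matroid observation stated just before the corollary. Let $M := \bigoplus_{1 \le i < j \le k} M(G_{ij})$ be the direct sum of the graphic matroids of the bichromatic subgraphs $G_{ij}$; since $\varphi$ is proper, every edge of $G$ lies in exactly one $G_{ij}$, so the ground sets $E(G_{ij})$ partition $E(G)$ and $M$ is a matroid on $E(G)$. The observation preceding the corollary says that for $E' \subseteq E(G)$, the coloring $\varphi$ is acyclic on $G - E'$ — i.e.\ $E'$ is a 2CC transversal with respect to $\varphi$ — if and only if $E(G) \setminus E'$ is independent in $M$. Consequently every 2CC transversal has size at least $|E(G)| - r(M)$, with equality precisely when its complement is a basis of $M$; hence $m(G, \varphi) = |E(G)| - r(M)$ and the minimum 2CC transversals are exactly the complements of the bases of $M$.

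Next I would start from an arbitrary $F$-acyclic 2CC transversal $E_F \subseteq E(G)$, which exists by Proposition~\ref{pro:forest}, and shrink it to optimal size using matroid extension. Set $I_0 := E(G) \setminus E_F$; this is independent in $M$ because $E_F$ is a 2CC transversal. Extend $I_0$ to a basis $B$ of $M$ by adding only edges of $E_F$, so that $B = I_0 \cup J$ with $J \subseteq E_F$, and put $E' := E(G) \setminus B = E_F \setminus J$. Then $E(G) \setminus E' = B$ is independent in $M$, so $E'$ is a 2CC transversal, and $|E'| = |E(G)| - |B| = |E(G)| - r(M) = m(G, \varphi)$, so $E'$ has the optimal size.

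The only remaining point — and the place where one must be slightly careful — is that $E'$ is still $F$-acyclic, i.e.\ that $F$-acyclicity is a hereditary property of edge sets. This is immediate: since $E' \subseteq E_F$, the graph induced by $E'$ is a subgraph of the forest induced by $E_F$, hence itself a forest, and deleting edges cannot create a path between two distinct vertices of $V(F)$ where there was none, so the graph induced by $E'$ still contains no such path. Thus $E'$ is an $F$-acyclic 2CC transversal with $|E'| = m(G, \varphi)$, as required. I do not expect a genuine obstacle here: the whole argument is the hereditary–extension property of independent sets, applied to the direct sum of the bichromatic graphic matroids, with Proposition~\ref{pro:forest} providing the initial $F$-acyclic transversal to be trimmed down.
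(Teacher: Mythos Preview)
Your proof is correct and follows precisely the approach the paper indicates: the remark preceding the corollary says the matroid observation ``yields'' it, and you have simply spelled out that deduction --- take the $F$-acyclic transversal from Proposition~\ref{pro:forest}, extend its complement to a basis of $\bigoplus_{i<j} M(G_{ij})$, and use that $F$-acyclicity is closed under taking subsets. There is nothing to add; this is exactly what the paper intends.
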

	
	Next, we generalize the results to planar graphs.

	\begin{theorem}
		Assume $G$ is a planar graph on $n$ vertices and $\varphi$ is a proper  coloring of $G$ with  $|\varphi(V(G))|=k$. Let $U \subseteq V(G)$  that induces a clique of size $|U| \le 3$. There exists a $U$-acyclic $2$CC transversal $E_U \subseteq E(G)$ with $|E_U| = m(G, \varphi) \le n - k$.
	\end{theorem}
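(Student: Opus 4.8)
The plan is to reduce the statement to the plane-triangulation case handled by Corollary~\ref{cor:forest}, by triangulating $G$ in a way that preserves both $m(G,\varphi)$ and the quantity $n-k$, and that places $U$ on a facial triangle; I would argue by induction on $n=|V(G)|$, the cases $n\le 3$ (where $m(G,\varphi)=0$) being trivial. To triangulate, fix a plane embedding of $G$ and, for each non-triangular face $f$, insert a new vertex $z_f$ in $f$ joined to every vertex on the boundary cycle of $f$, giving $z_f$ a brand-new color, a distinct new color for each such face. This produces a plane triangulation $\hat G$ with a proper coloring $\hat\varphi$ extending $\varphi$; if $G$ has $f_0$ non-triangular faces then $|V(\hat G)|=n+f_0$ and $|\hat\varphi(V(\hat G))|=k+f_0$, so $|V(\hat G)|-|\hat\varphi(V(\hat G))|=n-k$. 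Using the identity $m(H,\psi)=\sum_{i<j}\bigl(|E(H_{ij})|-|V(H_{ij})|+c_{ij}\bigr)$ recorded before Theorem~\ref{thm:n-3}, inserting one fresh-colored vertex $z_f$ leaves $H_{ij}$ unchanged for every pair of old colors $i,j$, while for the color $c$ of $z_f$ and any old color $c'$ the graph $H_{cc'}$ is a star (or a single vertex), whose contribution $a-(a+1)+1$ with $a=|E(H_{cc'})|$ equals $0$; hence $m(\hat G,\hat\varphi)=m(G,\varphi)$, and Theorem~\ref{thm:n-4} applied to $\hat G$ yields $m(G,\varphi)=m(\hat G,\hat\varphi)\le n-k$.

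It remains to locate $U$ on a facial triangle of $\hat G$. Since $2$-colored cycles, forests, and the condition of having no path between two vertices of $U$ all localize to blocks, and the clique $U$ lies in a single block, by handling blocks separately one may assume $G$ is $2$-connected, so every face is bounded by a cycle. If $|U|=3$ then $U$ is a triangle, and if that triangle is separating we split $G$ along $U$ into $G^1,G^2$ as in Lemma~\ref{lem:trisep}, so that $m(G,\varphi)=m(G^1,\varphi^1)+m(G^2,\varphi^2)$; since $G^i-V(U)$ is connected, $U$ is a facial triangle of the strictly smaller graph $G^i$, and we may apply the induction hypothesis to each $G^i$. Thus we may assume $|U|\le 2$, or $|U|=3$ with $U$ a facial triangle of $G$; in either case $\hat G$ has a facial triangle $F$ with $U\subseteq V(F)$: for $|U|\le1$ take any facial triangle; for $U=\{a,b\}$ take a face $f$ of $G$ incident with $ab$ if $f$ is a triangle, and otherwise the triangle $z_fab$, which is a face of $\hat G$ because $a$ and $b$ are consecutive on the boundary of $f$; and for $|U|=3$ take $F=U$, which remains a face of $\hat G$ since no vertex is inserted into a triangular face.

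By Corollary~\ref{cor:forest} there is an $F$-acyclic $2$CC transversal $E'$ of $\hat G$ with $|E'|=m(\hat G,\hat\varphi)=m(G,\varphi)$; put $E_U:=E'\cap E(G)$. As $G$ is a subgraph of $\hat G$ on which $\hat\varphi$ restricts to $\varphi$, every $2$-colored cycle of $(G,\varphi)$ is a $2$-colored cycle of $(\hat G,\hat\varphi)$ whose edge set lies in $E(G)$, so if $E'$ meets it then so does $E_U$; hence $E_U$ is a $2$CC transversal of $G$ with respect to $\varphi$, which gives $m(G,\varphi)\le|E_U|\le|E'|=m(G,\varphi)$, i.e.\ $|E_U|=m(G,\varphi)$. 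Moreover the graph induced by $E_U$ is a subgraph of the forest induced by $E'$ and has no path between two vertices of $V(F)\supseteq U$, so $E_U$ is $U$-acyclic. Finally, in the separating-triangle case we set $E_U:=E^1\cup E^2$ for the transversals $E^i$ obtained for $G^i$: they are edge-disjoint because a $U$-acyclic set contains no edge of the triangle $U$, which is the only overlap of $G^1$ and $G^2$, so $|E_U|=m(G^1,\varphi^1)+m(G^2,\varphi^2)=m(G,\varphi)$; $E_U$ is $U$-acyclic because any cycle, or any path joining two vertices of $U$, in $E^1\cup E^2$ would split at vertices of $U$ into a sub-path of some $E^i$ joining two vertices of $U$, contradicting the $U$-acyclicity of $E^i$; and $E_U$ is a $2$CC transversal of $G$ by the argument used in the proof of Lemma~\ref{lem:trisep}.

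The proof is essentially bookkeeping, and the two places needing care are: verifying that the fresh-color triangulation leaves $m(G,\varphi)$ unchanged (this is where the identity above and the ``a star contributes $0$'' computation are used), and verifying, when $|U|=3$, that the two sub-transversals glue to a set that is at once a forest, $U$-acyclic, a $2$CC transversal of $G$, and of the exact optimal size $m(G,\varphi)$. The reduction to $2$-connected $G$ and the splitting along a separating triangle are routine consequences of Lemmas~\ref{lem:A} and~\ref{lem:trisep}, but need to be phrased so that $U$-acyclicity is preserved when the pieces are recombined.
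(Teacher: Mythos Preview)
Your approach is correct and takes a genuinely different route from the paper's proof. The paper proceeds as in Proposition~\ref{pro:forest}: after peeling off clique separators it saturates $G$ with edges (keeping $\varphi$ proper and $G$ planar), and if a triangulation is not reached it finds a $4$-face $v_1v_2v_3v_4$ with $\varphi(v_1)=\varphi(v_3)$, identifies $v_1$ with $v_3$, and inducts, redoing the case analysis on which of $v'v_2,v'v_4$ lie in the smaller transversal. Your fresh-color triangulation sidesteps this entirely: by stellating each non-triangular face with a brand-new color you obtain a triangulation $\hat G$ with $m(\hat G,\hat\varphi)=m(G,\varphi)$ and with the same value of $n-k$, so Theorem~\ref{thm:n-4} and Corollary~\ref{cor:forest} apply directly and the restriction $E'\cap E(G)$ is automatically optimal. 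This is cleaner, and it makes transparent why the general planar bound reduces to the triangulation bound.

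Two small points need tightening. First, when you ``handle blocks separately'' you still owe the inequality $m(G,\varphi)\le n-k$ for non-$2$-connected $G$; this does not follow just from $|E_U|=m(G,\varphi)$. For connected $G$ with blocks $B_1,\dots,B_s$ one has $\sum_i|V(B_i)|=n+s-1$, while the bipartite incidence graph between blocks and used colors is connected (adjacent blocks share a cut vertex, hence a color), so $\sum_i k_{B_i}\ge k+s-1$; together these give $\sum_i(n_{B_i}-k_{B_i})\le n-k$, and the disconnected case follows componentwise. The same bookkeeping is needed when you split along a separating triangle $U$: $n_1+n_2=n+3$ and $k_1+k_2\ge k+3$ give the bound. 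Second, your ``$H_{cc'}$ is a star'' computation omits the isolated vertices of color $c'$ off the boundary of $f$; the contribution is still zero, but the displayed $a-(a+1)+1$ should be $a-(1+|\varphi^{-1}(c')|)+(1+|\varphi^{-1}(c')|-a)$.
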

	\begin{proof}
		We prove by induction on $n$. It clearly holds when $n \le k$. From now on we consider $n > k$.
		
		If $G$ has some separator $W \subset V(G)$ such that $|W| \le 3$ and $W$ induces a clique, let $A_1$ be a component of $G - W$ and $A_2$ the union of all other components. Denote by $G_i$ the subgraph of $G$ induced by $V(A_i) \cup W$ and by $\varphi_i$ the restriction of $\varphi$ on $V(G_i)$ ($i \in [2]$). Write $n_i := |V(G_i)|$ and $k_i := |\varphi_i(V(G_i)|$. We have $n_1 + n_2 = n - |W|$ and $k_1 + k_2 \ge k - |W|$. Without loss of generality, we require that $U \subseteq V(G_1)$. By the induction hypothesis, there exist a $U$-acyclic 2CC transversal $E_U'$ of $G_1$ with $|E_U'| \le n_1 - k_1$ and a $W$-acyclic 2CC transversal $E_W'$ of $G_2$ with $|E_W'| \le n_2 - k_2$. It is easy to show that $E_U := E_U' \cup E_W'$ is a $U$-acyclic 2CC transversal with $|E_U| \le n - k$.
		
		We assume that $G$ has no separator $W \subset V(G)$ such that $|W| \le 3$ and $W$ induces a clique. In particular, $G$ is 2-connected and every facial boundary of $G$ is a cycle. We add to $G$ as many edges as possible such that $\varphi$ remains as a proper coloring and $G$ remains as a plane graph. With abuse of notation, we call the new graph $G$. It suffices to prove the statement for the new graph $G$.
		
		If $G$ is a triangulation, we apply Theorem~\ref{thm:n-4} and Corollary~\ref{cor:forest} to conclude that $G$ has some $U$-acyclic 2CC transversal $E_U$ with $|E_U| = m(G, \varphi) \le n - k$.
		
		If any facial cycle of $G$ has a chord, then the end-vertices of the chord form a separator of $G$, contradicting our assumption.
		
		Assume $G$ is not a plane triangulation. As each facial cycle is an induced cycle, and any two non-adjacent vertices of a face are colored by the same color,  there exists a facial cycle $v_1v_2v_3v_4v_1$ in $G$ such that  $\varphi(v_1) = \varphi(v_3)$ and $\varphi(v_2) = \varphi(v_4)$. If $v_1, v_3$ have 3 common neighbors and $v_2, v_4$ have 3 common neighbors, then $G$ must be isomorphic to the plane graph obtained from the octahedron by deleting one vertex since we assume that $G$ has no separating triangle. One can easily verify that the statement holds for this graph. Thus, without loss of generality, we assume that $v_1, v_3$ have no common neighbor other than $v_2, v_4$. Let $G'$ be obtained from $G$ by identifying $v_1$ and $v_3$ as a new vertex $v'$ and $\varphi'$ be the coloring of $G'$ induced from $\varphi$. Denote $n' := |V(G')|$ and $k' := |\varphi'(V(G'))|$. We have $n' = n - 1$ and $k' = k$. Moreover, we can view $U$ as a vertex set of $G'$ since $U$ contains at most one of $v_1, v_3$. By the induction hypothesis, we have a $U$-acyclic 2CC transversal $E_U'$ of $G'$ with $|E_U'| = m(G', \varphi') \le n' - k'$. We construct $E_U$ as follows. Since the approach is similar to that in the proof of Proposition~\ref{pro:forest}, some details will be omitted.
		\begin{itemize}
			\item If $E_U' \cap \{v'v_2, v'v_4\} = \emptyset$, then there exists $j \in \{1, 3\}$ such that $E_U := E_U' \cup \{v_j v_2\}$ is $U$-acyclic.
			\item If $E_U' \cap \{v'v_2, v'v_4\} = \{v'v_i\}$ for some $i \in \{2, 4\}$, set $E_U := (E_U' \setminus \{v'v_i\}) \cup \{v_1v_i, v_3v_i\}$. 
			\item If $\{v'v_2, v'v_4\} \subseteq E_U'$, then there is a unique path $P$ in $G' - E_U'$ joining $v'$ and $v_2$ using only colors $\varphi(v_1)$ and $\varphi(v_2)$. We can view $P$ as a path in $G - ((E_F' \setminus \{v' v_2, v' v_4\}) \cup E(v_1 v_2 v_3 v_4 v_1))$ connecting $v_2$ and $v_j$ for some unique $j \in \{1, 3\}$. Set $E_U := (E_U' \setminus \{v' v_2, v' v_4\}) \cup \{v_j v_2, v_1 v_4, v_3 v_4\}$.
		\end{itemize} It is not hard to verify that the edge set $E_U$ constructed above is a $U$-acyclic 2CC transversal with $|E_U| \le n - k$. This completes the proof.
	\end{proof}

	\begin{corollary}
		Let $G$ be a planar graph on $n$ vertices. If $n \ge 5$, then $m_4(G) \le n-5$. If $G$ is $3$-colorable, then $m_3(G) \le n-3$. 
	\end{corollary}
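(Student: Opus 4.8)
The plan is to derive both bounds from the triangulation results of Section~\ref{sec:m}. The bound $m_3(G)\le n-3$ is immediate: since $G$ is $3$-colorable it has, for $n\ge 3$, a proper coloring $\varphi$ with $|\varphi(V(G))|=3$ (if $\chi(G)\le 2$, recolor one vertex of a proper $2$-coloring with a third color), and the preceding theorem, applied with $U=\emptyset$, gives $m_3(G)\le m(G,\varphi)\le n-3$.

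For $m_4(G)\le n-5$ with $n\ge 5$, I would first reduce to triangulations. Add edges to $G$ to obtain a plane triangulation $G^+$ on $V(G)$; since every cycle of $G$ is a cycle of $G^+$, a $2$CC transversal of $G^+$ with respect to a proper $4$-coloring $\psi$ restricts to one of $G$, whence $m_4(G)\le m(G^+,\psi)$, and it suffices to produce $\psi$ with $m(G^+,\psi)\le n-5$. If $\chi(G^+)=3$, let $\varphi$ be the unique proper $3$-coloring of $G^+$; since $G^+$ is a $3$-colorable triangulation on $n>3$ vertices it has a vertex $v$ with $d_{G^+}(v)\ge 4$, and Theorem~\ref{thm:n-3} gives $m(G^+,\varphi_v)\le n-5$. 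If $\chi(G^+)=4$, fix a proper $4$-coloring $\varphi$; then $|\varphi(V(G^+))|=4$, so Theorem~\ref{thm:n-4} gives $m(G^+,\varphi)\le n-4$. If this bound is already at most $n-5$ we are done, so the one remaining case is $m(G^+,\varphi)=n-4$.

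In this critical case, Corollary~\ref{cor:n-3,n-4} tells us there is a unique $G_{i_0}\in\mathcal V_{G^+}$ with $G_{i_0}\cong K_4$, while $\varphi$ uses only three colors on every other element of $\mathcal V_{G^+}$. As $n\ge 5$, $G^+$ is not $4$-connected (else Theorem~\ref{thm:n-4} would give $m\le n-5$) and not all members of $\mathcal V_{G^+}$ are copies of $K_4$ (else $m(G^+,\varphi)=0$), so $\mathcal T_{G^+}$ has $t:=|\mathcal V_{G^+}|\ge 2$ vertices and hence a leaf $G_\ell\neq G_{i_0}$. This $G_\ell$ meets the rest of $G^+$ in a single separating triangle $T$ (a face of $G_\ell$), has at least four vertices, is $3$-colored by $\varphi$, and so is a $3$-colorable triangulation with unique proper $3$-coloring $\varphi|_{G_\ell}$. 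Pick any $u\in V(G_\ell)\setminus V(T)$; all of its $G^+$-neighbors lie in $G_\ell$ (as $T$ separates $G_\ell$ from the rest), so recoloring $u$ with color $4$ yields a proper $4$-coloring $\varphi'$ of $G^+$ that agrees with $\varphi$ off $G_\ell$ and restricts on $G_\ell$ to the recoloring $(\varphi|_{G_\ell})_u$ of Theorem~\ref{thm:n-3}. Hence $m(G_\ell,\varphi'|_{G_\ell})\le |V(G_\ell)|-5$, and by Lemma~\ref{lem:tritree}, the bound $m\le |V|-3$ on the remaining pieces, and $\sum_i|V(G_i)|=n+3(t-1)$,
\begin{align*}
m(G^+,\varphi')=\sum_i m(G_i,\varphi'|_{G_i})\le\bigl(|V(G_\ell)|-5\bigr)+\bigl(|V(G_{i_0})|-4\bigr)+\sum_{i\neq \ell,i_0}\bigl(|V(G_i)|-3\bigr)=n-6 ,
\end{align*}
so $m_4(G)\le m(G^+,\varphi')\le n-5$.

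The reduction to $G^+$ and the $3$-chromatic case are routine. The crux is the case $m(G^+,\varphi)=n-4$: one must first invoke Corollary~\ref{cor:n-3,n-4} to pin down that the only obstruction to the bound $n-5$ is a single $K_4$-piece, and then check the structural facts that make the recoloring work — that $\mathcal T_{G^+}$ has a leaf other than the $K_4$-piece, that such a leaf piece is a $3$-colorable triangulation with a vertex off its shared triangle whose neighborhood stays inside the piece, and that recoloring that vertex drops the piece's contribution from $|V(G_\ell)|-3$ to at most $|V(G_\ell)|-5$ while leaving all other pieces unchanged. The bookkeeping over $\mathcal T_{G^+}$ is the only genuinely delicate part.
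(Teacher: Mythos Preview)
Your argument is correct and matches how the paper intends the corollary to follow from the Section~\ref{sec:m} results (the paper states the corollary without proof, expecting exactly this kind of derivation via reduction to a triangulation and appeal to Theorems~\ref{thm:n-3}, \ref{thm:n-4}, Corollary~\ref{cor:n-3,n-4} and Lemma~\ref{lem:tritree}).

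One small slip to fix: in the critical case you recolor $u$ with ``color $4$'', but the three colors used on $G_\ell$ need not be $\{1,2,3\}$ --- they are whichever three colors appear on the separating triangle attaching $G_\ell$ to the rest of $G^+$, and colour $4$ may well be among them. You should instead recolor $u$ with the unique colour $c\in[4]\setminus\varphi(V(G_\ell))$; since all $G^+$-neighbours of $u$ lie in $G_\ell$, this is still proper, and Theorem~\ref{thm:n-3} (applied to $G_\ell$ after the obvious relabelling of colours) gives $m(G_\ell,\varphi'|_{G_\ell})\le|V(G_\ell)|-5$ as you need. With that correction the bookkeeping over $\mathcal T_{G^+}$ goes through and yields $m(G^+,\varphi')\le n-6$. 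The aside ``not all members of $\mathcal V_{G^+}$ are copies of $K_4$ (else $m(G^+,\varphi)=0$)'' is unnecessary --- you already have $t\ge2$ from non-$4$-connectedness, hence a second leaf --- and its justification is slightly off, though the conclusion is true for a cleaner reason: $K_4$ is not $3$-colourable, so by Corollary~\ref{cor:n-3,n-4} no piece other than $G_{i_0}$ can be a $K_4$.
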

	
	\begin{theorem}
		There are infinitely  many $4$-connected planar graphs $G$ with $m_4(G)=|V(G)|-5$, and infinitely many $3$-colorable planar graphs with $m_3(G)=|V(G)|-3$.
	\end{theorem}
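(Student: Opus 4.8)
The plan is to exhibit one explicit infinite family of plane graphs for each of the two equalities, reading off the value of the parameter from results proved earlier. For $r\ge 3$ let $B_r$ be the \emph{bipyramid} over an $r$-gon: its vertices form a rim cycle $v_1v_2\cdots v_rv_1$ together with two apices $u,w$, each adjacent to every rim vertex, with $u\not\sim w$. Each $B_r$ is a simple plane triangulation on $r+2$ vertices (every face is a triangle $uv_iv_{i+1}$ or $wv_iv_{i+1}$). For the $3$-colorable equality take $B_r$ with $r$ even and $r\ge 4$; this graph is $3$-colorable (color $u,w$ with $1$ and properly $2$-color the even rim cycle with $2,3$), and since $B_r$ contains a triangle, every proper $3$-coloring of it uses exactly $3$ colors. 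Hence Corollary~\ref{cor:n-3,n-4} gives $m(B_r,\varphi)=|V(B_r)|-3$ for every proper $3$-coloring $\varphi$, so $m_3(B_r)=|V(B_r)|-3$, and $B_4,B_6,B_8,\dots$ is a suitable family. (Indeed $m_3(G)=|V(G)|-3$ for \emph{every} $3$-colorable plane triangulation $G$, by the same argument.)

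For the $4$-connected equality take $B_r$ with $r$ odd and $r\ge 5$, and set $n:=|V(B_r)|=r+2$. First, $B_r$ is $4$-connected: for any vertex set $S$ with $|S|\le 3$, if $\{u,w\}\subseteq S$ then $B_r-S$ is a connected subgraph of the rim cycle; if exactly one of $u,w$ lies in $S$, the other apex is adjacent to every surviving rim vertex; and if neither apex lies in $S$, at least $r-3\ge 2$ rim vertices survive, each adjacent to both $u$ and $w$. So it suffices to prove $m(B_r,\varphi)=n-5$ for every proper $4$-coloring $\varphi$, which yields $m_4(B_r)=n-5$ and hence the family $B_5,B_7,B_9,\dots$. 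Fix such a $\varphi$. The decisive observation is that $\varphi(u)=\varphi(w)$: otherwise every rim vertex, being adjacent to both $u$ and $w$, would have to take one of the at most two colors in $[4]\setminus\{\varphi(u),\varphi(w)\}$, which is impossible for an odd cycle. Write $a:=\varphi(u)=\varphi(w)$, let $b,c,d$ be the other three colors, and let $n_b,n_c,n_d$ be the numbers of rim vertices of those colors, so $n_b+n_c+n_d=r$; moreover each of $n_b,n_c,n_d$ is positive, since an odd cycle is not properly $2$-colorable. By the component-counting identity from Section~\ref{sec:m}, $m(B_r,\varphi)=\sum_{1\le i<j\le 4}\bigl(|E(G_{ij})|-|V(G_{ij})|+c_{ij}\bigr)$ equals the sum of the cyclomatic numbers of the six two-colored subgraphs $G_{ij}$. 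For $x\in\{b,c,d\}$ the subgraph $G_{ax}$ is the complete bipartite graph $K_{2,n_x}$---its color-$a$ side is $\{u,w\}$, both adjacent to all $n_x$ rim vertices of color $x$, no two of which are adjacent---so it has cyclomatic number $n_x-1$; and each of $G_{bc},G_{bd},G_{cd}$ is an induced subgraph of the rim cycle on a proper subset of its vertices (it omits the vertices of the third color, of which there is at least one), hence a forest of cyclomatic number $0$. Summing over the six pairs gives $m(B_r,\varphi)=(n_b-1)+(n_c-1)+(n_d-1)=r-3=n-5$.

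The mechanical ingredients---that the families consist of simple plane triangulations, the $4$-connectivity case check, and the cyclomatic-number bookkeeping---present no real difficulty. The one substantive point, which I expect to be the crux, is that $m(B_r,\varphi)=n-5$ must hold for \emph{every} proper $4$-coloring, not just a convenient one: by contrast, an even rim can be properly $2$-colored, and then there are $4$-colorings with $m$ far below $n-5$, so the analogous even family fails to attain the bound. It is precisely the odd parity of the rim that forces $\varphi(u)=\varphi(w)$ in all colorings, after which the component and cyclomatic counts are completely pinned down; finding a family with this uniform rigidity is the heart of the matter, while the remaining estimates follow at once from the component-counting identity for $m(G,\varphi)$ recorded just before Theorem~\ref{thm:n-3}.
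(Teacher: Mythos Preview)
Your proof is correct and follows essentially the same approach as the paper: the paper's family for the $4$-connected equality is precisely the bipyramid $B_r$ with $r$ odd (described there as ``joining two independent vertices $u,v$ to every vertex of a cycle $C$ on $n-2$ vertices with $n\ge 7$ odd''), and the argument that both apices receive the same color, together with the cyclomatic-number count on the resulting $K_{2,n_x}$ subgraphs, matches yours. For the $3$-colorable equality the paper likewise invokes Corollary~\ref{cor:n-3,n-4} on $3$-colorable plane triangulations, exactly as you note parenthetically.
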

	\begin{proof}
		It follows from Corollary \ref{cor:n-3,n-4} that for any 3-colorable plane triangulation $G$, $m_3(G)=|V(G)|-3$.
		
		Let $G$ be the 4-connected plane triangulation obtained by joining two independent vertices $u, v$ to every vertex of a cycle $C$ on $n - 2$ vertices with $n \ge 7$ odd. It is obvious that $G$ is not 3-colorable. Let $\varphi$ be any 4-coloring of $G$. Then, without loss of generality,  $\varphi(V(C)) = [3]$ and $\varphi(u) = \varphi(v) = 4$. For any $i \in [3]$, $G_{i4}$ is a connected plane graph with $|\varphi^{-1}(i)|$ faces, and for $i,j \in [3]$, $G_{ij}$ is acyclic. Therefore $m(G, \varphi) = \sum_{i \in [3]} (|\varphi^{-1}(i)| - 1) = n - 5$.
	\end{proof}

	\section{Upper bounds for $m'_k(G)$} \label{sec:m_k}
	
	In this section we study the problem of how many edges we need to remove from a planar graph in order to make it acyclic $k$-colorable for $k = 3, 4$.
	
	\begin{theorem}
		Let $G$ be a planar graph on $n$ vertices. We have $m_3(G) \le (13n - 42) / 10$ and $m_4(G) \le (3n - 12) / 5$.
	\end{theorem}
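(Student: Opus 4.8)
The plan is to reduce the problem to plane triangulations, then to reformulate both inequalities as a question about choosing a good vertex colouring (not necessarily proper), and finally to attack that question by discharging.

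\emph{Reductions.} Since adding edges while staying planar cannot decrease $m'_k$ --- an optimal edge set together with an acyclic $k$-colouring of a planar supergraph $\hat G \supseteq G$ on the same vertex set restricts to such a pair for $G$ --- and since the right-hand sides depend only on $n$, I may assume $G$ is a plane triangulation; the cases $n \le 6$ are checked directly. If $G$ has a separating triangle $T$ and $G = G^1 \cup G^2$ is the split along $T$, then $m'_k(G) \le m'_k(G^1) + m'_k(G^2)$: one first picks, by an exchange argument in the spirit of Lemma~\ref{lem:A}, optimal colourings of $G^1$ and $G^2$ that are proper on $T$ and use no edge of $T$, then relabels colours so that the two agree on $V(T)$ and glues (a $2$-coloured cycle in the union would, as in Lemma~\ref{lem:trisep}, live inside one $G^i$). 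Since $|V(G^1)|+|V(G^2)| = n+3$ and both target functions $\alpha n - \beta$ satisfy $3\alpha \le \beta$ (indeed $3\cdot\tfrac{3}{5} \le \tfrac{12}{5}$ and $3\cdot\tfrac{13}{10} \le \tfrac{42}{10}$), the bound is inherited, so I may assume $G$ is $4$-connected; in particular $\delta(G)\in\{4,5\}$.

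\emph{Reformulation.} For \emph{any} map $\psi\colon V(G)\to[k]$ and distinct $i,j$, let $c_{ij}(\psi)$ denote the number of components of the bipartite subgraph of $G$ with parts $\psi^{-1}(i),\psi^{-1}(j)$ (isolated vertices included). Deleting all monochromatic edges and then, inside each colour-pair bipartite graph, a spanning-forest complement, yields an acyclic $k$-colouring of the remaining graph and shows
\[
m'_k(G)\;\le\;|E(G)|-(k-1)\,n+\sum_{1\le i<j\le k}c_{ij}(\psi).
\]
For a triangulation $|E(G)|=3n-6$, so it suffices to exhibit a colouring with $\sum_{1\le i<j\le 4}c_{ij}(\psi)\le(3n+18)/5$ (for $m'_4$) and one with $\sum_{1\le i<j\le 3}c_{ij}(\psi)\le(3n+18)/10$ (for $m'_3$). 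This is consistent with the earlier results: a $3$-colourable triangulation has $c_{ij}\equiv 1$ by Lemma~\ref{lem:n-3}, recovering $m'_3\le n-3$.

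\emph{Building the colouring.} For $m'_4$ I would begin with a proper $4$-colouring supplied by the Four Colour Theorem and rebalance it via Kempe-type recolourings until $\sum_{i<j}c_{ij}$ is minimum, then bound that minimum by discharging: place one unit of charge on each component of each of the six bipartite graphs and send it out to the faces and vertices of $G$ that the component meets; using $\delta(G)\ge 4$ and planarity one shows each vertex/face absorbs little, which should give $\sum c_{ij}\le(3n+18)/5$. The point is that a component equal to a single vertex $v$ forces every neighbour of $v$ to miss one fixed colour, i.e.\ a rigid colour pattern around the link of $v$ that a Kempe swap would break --- so in an optimal colouring such singleton components are scarce, and this is what makes the rate $\tfrac35$ work. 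For $m'_3$ one additionally folds $[4]$ down to $[3]$ by merging the cheapest colour class into the other three, giving each folded vertex a colour not used on its neighbours (which exists unless its degree is large, the only case needing care), and the same ``singleton rigidity'' controls the three resulting component counts.

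\emph{Main obstacle.} The hard part will be making the discharging precise and correctly normalised, so that the estimate comes out as exactly $\tfrac35 n+O(1)$ (resp.\ $\tfrac{3}{10}n+O(1)$): this forces one to prove that a $\sum c_{ij}$-minimal colouring of a $4$-connected plane triangulation is genuinely ``spread out'' --- few singleton components and limited overlap among the different bipartite graphs --- and it is exactly here that the $4$-connectivity of $G$ (for instance through the Hamiltonicity of $4$-connected planar triangulations, or a dedicated exchange argument) has to enter in an essential way.
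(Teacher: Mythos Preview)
Your reformulation $m'_k(G)\le |E(G)|-(k-1)n+\sum_{i<j}c_{ij}(\psi)$ is correct and the reduction to triangulations is fine, but what follows is a plan, not a proof. You never actually establish that a $4$-colouring with $\sum_{i<j}c_{ij}\le(3n+18)/5$ exists: you say you ``would'' start from a proper $4$-colouring, Kempe-rebalance to minimise $\sum c_{ij}$, and then discharge, but no discharging rules are written down and no inequality is proved. The same applies to the $3$-colouring step, where ``folding'' the fourth class into the other three is only sketched. The constants $3/5$ and $13/10$ are sharp enough that a heuristic outline is not convincing; one genuinely needs to show that singleton components are scarce in an optimal colouring, and you yourself flag this as the hard, unexecuted part. (The separating-triangle reduction for $m'_k$ is also more delicate than for $m(G,\varphi)$: the Lemma~\ref{lem:A} exchange assumes the colouring is proper on $T$, which is not automatic here since edges of $T$ might lie in $E'$.)

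The paper's proof avoids all of this by a completely different and much shorter device: it starts from Borodin's theorem that every planar graph has an \emph{acyclic} $5$-colouring $\varphi$. By averaging, some colour class, say $\varphi^{-1}(5)$, satisfies $\sum_{v\in\varphi^{-1}(5)}(d_G(v)-3)\le\frac15\sum_{v}(d_G(v)-3)=(3n-12)/5$. Since the link of each $v\in\varphi^{-1}(5)$ is a cycle and $\varphi$ is acyclic, $v$ has three neighbours of three distinct colours; delete the other $d_G(v)-3$ edges at $v$ and recolour $v$ with the remaining colour in $[4]$. The resulting $4$-colouring is proper and acyclic on what remains (a $2$-coloured cycle cannot pass through such a $v$, and elsewhere acyclicity is inherited from $\varphi$), giving $m'_4(G)\le(3n-12)/5$ directly. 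Iterating the same averaging from $4$ to $3$ colours yields $(13n-42)/10$. No discharging, no Four Colour Theorem, no Kempe chains, and no $4$-connectivity reduction are needed; the key input you are missing is precisely Borodin's acyclic $5$-colouring.
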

	\begin{proof}
		We first prove that $m_4(G) \le (3n - 12) / 5$. As every plane graph is a spanning subgraph of some plane triangulation, we may assume that $G$ is a plane triangulation on $n$ vertices. Let $\varphi: V(G) \rightarrow [5]$ be an acyclic 5-coloring of $G$. Without loss of generality, assume that \begin{align*}
			\sum_{v \in \varphi^{-1}(5)} (d_G(v) - 3) \le \frac15 \sum_{v \in V(G)} (d_G(v) - 3) = \frac{3n - 12}{5}.
		\end{align*} Let $v$ be any vertex in $\varphi^{-1}(5)$. Since the neighbors of $v$ span some cycle and $\varphi$ is acyclic, there exist $v_1, v_2, v_3 \in N_G(v)$ whose colors are pairwise distinct. Define $E_v$ to be the set of edges incident to $v$ other than $v v_1, v v_2$ and $v v_3$, and set $\varphi'(v)$ to be the color from $[4]$ other than $\varphi(v_1), \varphi(v_2), \varphi(v_3)$. To complete the construction, we set $E' := \bigcup_{v \in \varphi^{-1}(5)} E_v$ and set $\varphi'(u) := \varphi(u)$ for all $u \in \bigcup_{i \in [4]} \varphi^{-1}(i)$. It is readily to verify that $\varphi'$ is a proper 4-coloring of $G' := G - E'$ and $|E'| = \sum_{v \in \varphi^{-1}(5)} (d_G(v) - 3) \le \frac{3n - 12}{5}$. Suppose $\varphi'$ is not an acyclic coloring of $G'$, then there is a cycle $C$ contained in $\varphi'^{-1}(i) \cup \varphi'^{-1}(j)$ for some distinct $i, j \in [4]$. Note that $C$ cannot contain any $v \in \varphi^{-1}(5)$ since $v$ has precisely three neighbors of three different colors in $G'$. Therefore $C$ is contained in $G'[(\varphi'^{-1}(i) \cup \varphi'^{-1}(j)) \setminus \varphi^{-1}(5)] = G[\varphi^{-1}(i) \cup \varphi^{-1}(j)]$, a contradiction.
		
		This approach can be repeated to show that $m_3(G) \le (13n - 42) / 10$. More precisely, we may assume that \begin{align*}
			\sum_{v \in \varphi'^{-1}(4)} (d_{G'}(v) - 2) \le \frac14 \sum_{v \in V(G')} (d_{G'}(v) - 2) = \frac{4n - 12 - 2|E'|}{4}.
		\end{align*} It is not hard to see that for any $v \in V(G')$, $|\varphi'(N_{G'}(v))| \ge 2$. Let $v \in \varphi'^{-1}(4)$ and $v_1, v_2 \in N_{G'}(v)$ be of different colors. Define $E_v'$ to be the set of edges incident to $v$ other than $v v_1$ and $v v_2$, and set $\varphi''(v)$ to be the color from $[3]$ other than $\varphi(v_1), \varphi(v_2)$. Set $E'' := E' \cup \bigcup_{v \in \varphi'^{-1}(4)} E_v'$ and set $\varphi''(u) := \varphi'(u)$ for all $u \in \bigcup_{i \in [3]} \varphi'^{-1}(i)$. Again, it is readily to verify that $\varphi''$ is a proper 3-coloring of $G'' := G - E''$ and \begin{align*}
		|E''| = |E'| + \sum_{v \in \varphi'^{-1}(4)} (d_{G'}(v) - 2) \le \frac{13n - 42}{10}.
	\end{align*} Similarly as before, one can show that $\varphi''$ is an acyclic 3-coloring of $G''$ and hence the result follows.
\end{proof}

We remark that there exist infinitely many planar graphs $G$ on $n$ vertices so that $G - E'$ is not acyclically $4$-colorable for any $E' \subseteq E(G)$ with $|E'| < (n - 2) / 4$. Let $H$ be a 2-face-colorable triangulation and $\mathcal{T}$ be a family of $|E(H)| / 3$ edge-disjoint facial triangles of $H$. Let $G$ be obtained from $H$ by replacing each triangle from $\mathcal{T}$ by an octahedron. Therefore $E(G)$ is partitioned into $|E(H)| / 3$ octahedra, and $n = |V(H)| + |E(H)| = 4|V(H)| - 6$. As the octahedron is not acyclically 4-colorable, any $E' \subseteq E(G)$ satisfying that $G - E'$ is acyclically 4-colorable has size at least $|E(H)| / 3 = \frac{n - 2}{4}$.

\section*{Acknowledgments}
The research of On-Hei Solomon Lo was supported by a Postdoctoral Fellowship of Japan Society for the Promotion of Science and by Natural Sciences and Engineering
Research Council of Canada.
The research of Ben Seamone was supported by Natural Sciences and Engineering
Research Council of Canada.
The research of Xuding Zhu was supported by National Natural Science Foundation of China grant NSFC 11971438 and U20A2068.

\bibliographystyle{abbrv}
\bibliography{paper}

\end{document}